\renewcommand{\labelenumi}{\roman{enumi})}
\DeclareFontFamily{OT1}{pzc}{}
\DeclareFontShape{OT1}{pzc}{m}{it}{<-> s * [1.10] pzcmi7t}{}
\DeclareMathAlphabet{\mathpzc}{OT1}{pzc}{m}{it}
\let\originalleft\left
\let\originalright\right
\renewcommand{\left}{\mathopen{}\mathclose\bgroup\originalleft}
\renewcommand{\right}{\aftergroup\egroup\originalright}
\newcommand\headercell[1]{%
   \smash[b]{\begin{tabular}[t]{@{}c@{}} #1 \end{tabular}}}
\begin{document}

\newcommand{\co}{\mathpzc{o}}
\newcommand\cF{\mathcal{F}}
\newcommand\cH{\mathcal{H}}
\newcommand\cX{\mathcal{X}}
\newcommand\id{{\rm id}}
\newcommand\rD{{\rm D}}
\newcommand\ee{\varepsilon}
\newcommand\modNormal{~\text{mod}~}
\newcommand\modSmall{\,\text{mod}\,}

\newtheorem{theorem}{Theorem}[section]
\newtheorem{corollary}[theorem]{Corollary}
\newtheorem{lemma}[theorem]{Lemma}
\newtheorem{proposition}[theorem]{Proposition}
\newtheorem{conjecture}[theorem]{Conjecture}

\theoremstyle{definition}
\newtheorem{definition}{Definition}[section]
\newtheorem{example}[definition]{Example}

\theoremstyle{remark}
\newtheorem{remark}{Remark}[section]




\title{
Border-collision bifurcations from stable fixed points to any number of coexisting chaotic attractors.
}
\author{
D.J.W.~Simpson\\\\
School of Mathematical and Computational Sciences\\
Massey University\\
Palmerston North, 4410\\
New Zealand
}
\maketitle


\begin{abstract}

In diverse physical systems stable oscillatory solutions devolve into more complicated dynamical behaviour through border-collision bifurcations. Mathematically these occur when a stable fixed point of a piecewise-smooth map collides with a switching manifold as parameters are varied. The purpose of this paper is to highlight the extreme complexity possible in the subsequent dynamics. We perturb instances of the border-collision normal form in $n \ge 2$ dimensions for which the $n^{\rm th}$ iterate is a direct product of identical skew tent maps that have chaotic attractors comprised of $k \ge 2$ disjoint intervals. The resulting maps have coexisting attractors and we use Burnside's lemma to count the number of mutually disjoint trapping regions produced by taking unions of Cartesian products of slight enlargements of the disjoint intervals. The attractors are shown to be chaotic by demonstrating that some iterate of the map is piecewise-expanding. The resulting transition from a stable fixed point to many coexisting chaotic attractors is shown to occur throughout open subsets of parameter space and not destroyed by adding higher order terms to the normal form, hence can be expected to arise generically in mathematical models.

\end{abstract}

\section{Introduction}
\label{sec:intro}

Piecewise-smooth maps
have different functional forms in different parts of phase space.
As a parameter of a piecewise-smooth map is varied,
a bifurcation occurs when a fixed point collides with a switching manifold, where the functional form of the map changes.
This type of bifurcation is termed a {\em border-collision bifurcation} (BCB).

BCBs have been identified in diverse applications.
Classical examples include power converters, where BCBs
can cause the internal dynamics of the converter to suddenly become quasi-periodic or chaotic \cite{ZhMo03,ZhMo06b},
and mechanical systems with friction, where BCBs
can induce recurring transitions between sticking and slipping motion \cite{DiBu08,SzOs09}.
More recently in \cite{RoHi19b}
it is shown how BCBs
can explain changes to the frequency of annual influenza outbreaks.

After the popularisation of BCBs by Nusse and Yorke in \cite{NuYo92},
it was quickly realised that BCBs often represent remarkably complicated transitions,
equivalent to the amalgamation of several (even infinitely many) smooth bifurcations \cite{NuYo95,BaGr99,Ho04,LeNi04}.
It is natural to then ask, how complicated can the transition be?
At a BCB the local attractor of a system
can change from a stable fixed point to a higher period solution, a quasiperiodic solution, or a chaotic solution \cite{Si16}.
It can also split into several attractors.
These attractors are created simultaneously and grow out of a single point.
If the parameter that affects the bifurcation is varied dynamically,
then in the presence of arbitrarily small noise it cannot be known {\em a priori} which attractor the system will transition to,
although the size of the basin of attraction of an attractor
can be expected to correlate positively with the likelihood that it will be selected \cite{DuNu99,KaMa98}.

Examples of BCBs creating multiple attractors
have been described by many authors \cite{BaGr99,SiMe08b,SuGa08,ZhMo08}.
Examples of BCBs creating arbitrarily many or infinitely many attractors was described in \cite{DoLa08,Si14,Si14b}.
The first numerical example of a BCB creating multiple chaotic attractors
is possibly that given in Section 7 of Avrutin {\em et.~al.}~\cite{AvSc12}.
More recently Pumari\~{n}o {\em et.~al.}~\cite{PuRo18} showed that two-dimensional, piecewise-linear maps
can exhibit $2^m$ coexisting chaotic attractors for any $m \ge 1$,
and under a coordinate transformation their maps are equivalent to members of the border-collision normal form.

The purpose of this paper is to demonstrate these complexities further.
Our approach extends that of Glendinning \cite{Gl15b}
who studied perturbations of the $n$-dimensional border-collision normal form
for which the $n^{\rm th}$ iterate is a direct product of identical skew tent maps.
But whereas Glendinning \cite{Gl15b} used skew tent maps that have a chaotic attractor consisting of one interval,
here we use skew tent maps that have a chaotic attractor consisting of $k$ disjoint intervals.
This novelty generates multiple attractors.
A similar strategy was employed by Wong and Yang \cite{WoYa19}
to get two chaotic attractors in the two-dimensional case.

Unlike previous works we take the extra step of proving that the bifurcation phenomenon is not an
artifact of the piecewise-linear nature of the border-collision normal form.
We show the phenomenon
persists when nonlinear terms are added to the normal form.
For a generic piecewise-smooth map (representing a mathematical model),
near a BCB the map is conjugate to a member of the normal form plus nonlinear terms \cite{Si16}.

To prove chaos we show some iterate of the map is piecewise-smooth and expanding.
Immediately this implies every Lyapunov exponent is positive,
but stronger results have been obtained in the context of ergodic theory.
Piecewise-$C^2$ expanding maps generically (i.e.~on an open, dense subset within the space of all such maps)
have at least one invariant measure that is absolutely continuous with respect to the Lebesgue measure \cite{Co02}.
The requirement that each piece is $C^2$ is not satisfied for BCBs
that correspond to grazing-sliding bifurcations (where the quadratic tangency of the grazing trajectory of the underlying system of differential equations
induces an order-$\frac{3}{2}$ error term).
In this case one can look to \cite{Sa00} for more general results regarding invariant measures.
In the two-dimensional case, if the map is piecewise-analytic
the genericity condition is not needed \cite{Bu00,Ts00}.
Analogous results for piecewise-linear maps are described in \cite{Bu99,Ts01}.

The remainder of the paper is organised as follows.
First in \S\ref{sec:results} we formally state the main results:
Theorem \ref{th:2d} for two-dimensional maps
and Theorem \ref{th:nd} for $n$-dimensional maps, for any $n \ge 2$.
In \S\ref{sec:perturb} we introduce a simple form for $n$-dimensional maps about which perturbations will be performed,
and show how nonlinear terms can be accommodated.
In \S\ref{sec:stableFixedPoint} we perform the straight-forward task of demonstrating that
our class of perturbed maps have an asymptotically stable fixed point on one side of the BCB.
Then in \S\ref{sec:directProduct} we show that on the other side of the BCB
the $n^{\rm th}$ iterate of the simple form is a direct product of identical skew tent maps.

In \S\ref{sec:skewTentMaps} we review attractors of skew tent maps
focussing on chaotic attractors that are comprised of $k \ge 2$ disjoint intervals.
In \S\ref{sec:trappingRegion} we fatten these intervals to obtain trapping regions for the skew tent maps.
Then in \S\ref{sec:symbolics} we take Cartesian products of intervals to obtain boxes,
where unions of the boxes form trapping regions for the $n$-dimensional maps.
In \S\ref{sec:number} we use Burnside's lemma and combinatorical arguments to
derive an explicit formula for the number of trapping regions that result from this construction as a function of $n$ and $k$.
We believe that each trapping region contains a unique attractor for sufficiently small perturbations,
but a proof of this remains for future work.
In the case $n=2$ we can obtain any number of trapping regions.

Then in \S\ref{sec:expansion} we prove that some iterate of the map is piecewise-smooth and expanding,
and in \S\ref{sec:proof} collate the results to prove Theorems \ref{th:2d} and \ref{th:nd}.
Lastly \S\ref{sec:conc} provides some final remarks.

\section{Main results}
\label{sec:results}

In this section we state our main results on the genericity of BCBs
where a stable fixed point bifurcates into several chaotic attractors.
Throughout the paper ${\rm int}(\cdot)$ denotes the interior of a set.

\begin{definition}
Let $f : D \to D$ be a map, where $D \subset \mathbb{R}^n$, and let $\Omega \subset D$ be compact.
If $f(\Omega) \subset {\rm int}(\Omega)$ then $\Omega$ is said to be a {\em trapping region} for $f$.
\label{df:trap}
\end{definition}

For any trapping region $\Omega$, the set $\bigcap_{i \ge 0} f^i(\Omega)$ is an {\em attracting set}, by definition \cite{Ro04}.
A {\em topological attractor} is then a subset of an attracting set that is dynamically indivisible, in some sense.
Different authors use different definitions for this indivisibility constraint (e.g.~there exists a dense orbit) \cite{Me07}.
For this paper we just require the fact that any trapping region contains at least one attractor.

To motivate the statement of the next definition,
note that in Definition \ref{df:trap} the restriction of $f$ to $\Omega$
is a map $f : \Omega \to \Omega$.

\begin{definition}
Let $\Omega \subset \mathbb{R}^n$ be compact.
A map $f : \Omega \to \Omega$ is said to be {\em piecewise-$C^r$} ($r \ge 1$)
if there exist finitely many mutually disjoint open regions $\Omega_i \subset \Omega$ such that
\begin{enumerate}
\renewcommand{\labelenumi}{\roman{enumi})}
\setlength{\itemsep}{0pt}
\item
each $\Omega_i$ has a piecewise-$C^r$ boundary,
\item
$\Omega$ is the union of the closures of the $\Omega_i$, and
\item
for each $i$ the map $f$ is $C^r$ on $\Omega_i$ and
can be extended so that it is $C^r$ on a neighbourhood of the closure of $\Omega_i$.
\end{enumerate}
\label{df:pws}
\end{definition}

\begin{definition}
The map $f$ of Definition \ref{df:pws} is {\em expanding} if there exists $\lambda > 1$ such that
\begin{equation}
\| \rD f(x) v \| \ge \lambda \| v \|,
\nonumber
\end{equation}
for all $x \in \bigcup_i \Omega_i$ and all $v \in \mathbb{R}^n$.
\label{df:expanding}
\end{definition}

The dynamics near any generic BCB in two dimensions can be described by a map of the form
\begin{equation}
x \mapsto \begin{cases}
\begin{bmatrix} \tau_L x_1 + x_2 + \mu \\ -\delta_L x_1 \end{bmatrix} + E_L(x;\mu), & x_1 \le 0, \\
\begin{bmatrix} \tau_R x_1 + x_2 + \mu \\ -\delta_R x_1 \end{bmatrix} + E_R(x;\mu), & x_1 \ge 0,
\end{cases}
\label{eq:2dbcnfWithHOTs}
\end{equation}
where $x = (x_1,x_2)$ is the state variable,
$\mu \in \mathbb{R}$ is the bifurcation parameter,
and $E_L$ and $E_R$ are $C^1$ and $\co \left( \| x \| + |\mu| \right)$ \cite{Si16}.
The last condition means $\frac{\| E_L(x;\mu) \|}{\| x \| + |\mu|} \to 0$ as $(x;\mu) \to (0;0)$
(regardless of how the limit is taken), and similarly for $E_R$.
Since BCBs are local, only one switching condition is relevant
and coordinates have been chosen so that it is $x_1 = 0$.
The BCB of \eqref{eq:2dbcnfWithHOTs} occurs at the origin $x=0$ when $\mu = 0$.
The nonlinear terms $E_L$ and $E_R$ often have no qualitative effect on the bifurcation
(as shown below for our setting)
and by dropping these terms we obtain a piecewise-linear family of maps known as the two-dimensional border-collision normal form.
Notice this form has four parameters $\tau_L, \delta_L, \tau_R, \delta_R \in \mathbb{R}$, in addition to $\mu$.

\begin{theorem}
For all $N \ge 1$ there exists an open set $U \subset \mathbb{R}^4$
such that for any piecewise-$C^r$ ($r \ge 1$) map $f$ of the form \eqref{eq:2dbcnfWithHOTs}
with $(\tau_L,\delta_L,\tau_R,\delta_R) \in U$,
there exists $\mu_0 > 0$ and $m \ge 1$ such that
\begin{enumerate}
\renewcommand{\labelenumi}{\roman{enumi})}
\setlength{\itemsep}{0pt}
\item
for all $\mu \in (-\mu_0,0)$, $f$ has an asymptotically stable fixed point, and
\item
for all $\mu \in (0,\mu_0)$, $f$ has $N$ disjoint trapping regions
on which $f^m$ is piecewise-$C^r$ and expanding.
\end{enumerate}
\label{th:2d}
\end{theorem}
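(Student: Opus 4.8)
The plan is to reduce the two-dimensional statement to a special case of the $n$-dimensional construction with $n=2$, and then to choose the entries of $U$ so that the second iterate $f^2$ of the piecewise-linear part is a direct product of two copies of a single skew tent map whose chaotic attractor has $k$ disjoint intervals, with $k$ chosen large enough that the Burnside-type count in \S\ref{sec:number} yields at least $N$ trapping regions. First I would fix the skew tent map data: pick slopes $(a,b)$ for which the skew tent map $g_{a,b}$ has an attractor consisting of exactly $k$ disjoint intervals (this is the standard periodic-window/band-merging structure recalled in \S\ref{sec:skewTentMaps}), and choose $k=k(N)$ so the formula of \S\ref{sec:number} gives a value $\ge N$. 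Then I would translate these slopes into a quadruple $(\tau_L,\delta_L,\tau_R,\delta_R)$ via the direct-product identity of \S\ref{sec:directProduct}, so that for the piecewise-linear normal form the map $f_0^2$ is conjugate (after the coordinate change used there) to $g_{a,b}\times g_{a,b}$. Openness of the admissible set of slopes for the skew tent map, together with continuity of the correspondence slopes $\leftrightarrow$ $(\tau_L,\delta_L,\tau_R,\delta_R)$, gives the open set $U\subset\mathbb{R}^4$.

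Next I would build the trapping regions. On the skew tent side, \S\ref{sec:trappingRegion} fattens each of the $k$ intervals to a slightly larger interval that is mapped strictly inside the appropriate fattened interval; taking Cartesian products of these fattened intervals and then the unions dictated by the permutation/orbit structure (as in \S\ref{sec:symbolics}) produces finitely many mutually disjoint compact sets $\Omega^{(1)},\dots,\Omega^{(M)}$ with $M\ge N$, each satisfying $f_0^m\bigl(\Omega^{(j)}\bigr)\subset\operatorname{int}\bigl(\Omega^{(j)}\bigr)$ for a suitable common iterate $m$ (a multiple of $2$ determined by the least common multiple of the relevant periods). Discarding all but $N$ of them leaves $N$ disjoint trapping regions for $f_0^m$. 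The stable fixed point on the $\mu<0$ side comes directly from \S\ref{sec:stableFixedPoint}: the conditions on $(\tau_L,\delta_L,\tau_R,\delta_R)\in U$ force the relevant left-half-plane fixed point to be admissible and to have eigenvalues inside the unit disk for $\mu<0$, and these conditions can be imposed as finitely many open inequalities, shrinking $U$ if necessary.

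The main obstacle — and the step that requires the perturbation analysis of \S\ref{sec:perturb} and \S\ref{sec:expansion} — is passing from the piecewise-linear normal form $f_0$ to the full map $f$ with the $C^1$, $\co(\|x\|+|\mu|)$ error terms $E_L,E_R$, and simultaneously rescaling in $\mu$. The standard device is the scaling $x=\mu\,\tilde x$ for $\mu>0$: under this conjugacy the linear part of \eqref{eq:2dbcnfWithHOTs} is invariant while the error terms become $\co(1)$, i.e.\ they tend to $0$ in $C^1$ uniformly on compact sets as $\mu\to0^+$. Thus for all sufficiently small $\mu\in(0,\mu_0)$ the rescaled map is a $C^1$-small perturbation of $f_0$. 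Since each $\Omega^{(j)}$ is a trapping region for $f_0^m$ with room to spare (strict inclusion of a compact set in an open set), the trapping property is robust under $C^1$-small perturbations, so the (unscaled) sets $\mu\,\Omega^{(j)}$ remain disjoint trapping regions for $f^m$. Likewise, the expansion estimate: on the linear pieces $\rD f_0^m$ has all singular values $>1$ by the choice of skew tent slopes (the product of two piecewise-expanding maps is piecewise-expanding, and $m$ is chosen so the relevant composition is expanding — this is exactly the content of \S\ref{sec:expansion}); since expansion with constant $\lambda>1$ on each of finitely many pieces is an open condition in the $C^1$ topology, it persists for $\rD f^m$ once $\mu$ is small enough. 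Assembling these — $U$ shrunk to enforce stability, $k$ and hence $m$ chosen for the count, and $\mu_0$ chosen small for the perturbation estimates — yields both conclusions. I expect the delicate points to be bookkeeping: verifying that the pieces $\Omega_i$ of $f^m$ restricted to a trapping region inherit piecewise-$C^r$ boundaries from the (piecewise-linear) switching manifolds of $f_0$ together with the $C^r$ graphs introduced by the nonlinear terms, and checking that the common iterate $m$ can be taken uniform over all $N$ retained regions; both are handled by taking $m$ to be a suitable common multiple and invoking the openness of all the relevant conditions.
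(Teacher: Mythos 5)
Your proposal is correct and follows essentially the same route as the paper: Theorem \ref{th:2d} is obtained from the $n$-dimensional construction with $n=2$ by choosing $k$ so that the Burnside count $N[k,2]=\left\lceil k/2 \right\rceil$ yields (at least) $N$ trapping regions, with $m=kn$ and all the perturbation, trapping, and expansion steps handled exactly as in \S\ref{sec:perturb}--\S\ref{sec:expansion}. The only cosmetic difference is that the paper takes $k=2N$ so that the count is exactly $N$, whereas you allow a surplus and discard extras, which is equally valid.
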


Theorem \ref{th:2d} is proved in \S\ref{sec:proof}.
Fig.~\ref{fig:exBifDiag} shows a typical example with $N=2$.
This figure is for \eqref{eq:2dbcnfWithHOTs} with
\begin{align}
\tau_L &= -0.02, &
\delta_L &= -0.62, &
\tau_R &= -0.02, &
\delta_R &= 3,
\label{eq:xiEx}
\end{align}
and
\begin{align}
E_L(x;\mu) &= \begin{bmatrix} -x_1^2 \\ 0 \end{bmatrix}, &
E_R(x;\mu) &= \begin{bmatrix} 0 \\ 0 \end{bmatrix}.
\label{eq:ELEREx}
\end{align}
The bifurcation diagram, panel (a), shows that as the value of $\mu$ passes through $0$,
a stable fixed point turns into two coexisting attractors (coloured orange and black).
Panel (b) shows these attractors in phase space for one value of $\mu$.
Each attractor appears to be two-dimensional, like in \cite{WoYa19},
but the orange attractor has three connected components
while the black attractor has six connected components.
Numerically we observe the orange attractor is destroyed in a crisis \cite{GrOt83} at $\mu \approx 0.01$
after which all forward orbits converge to the black attractor.
This example is explained further in \S\ref{sec:conc}.

\begin{figure}[h!]
\begin{center}
\setlength{\unitlength}{1cm}
\begin{picture}(12.5,5)
\put(0,0){\includegraphics[height=5cm]{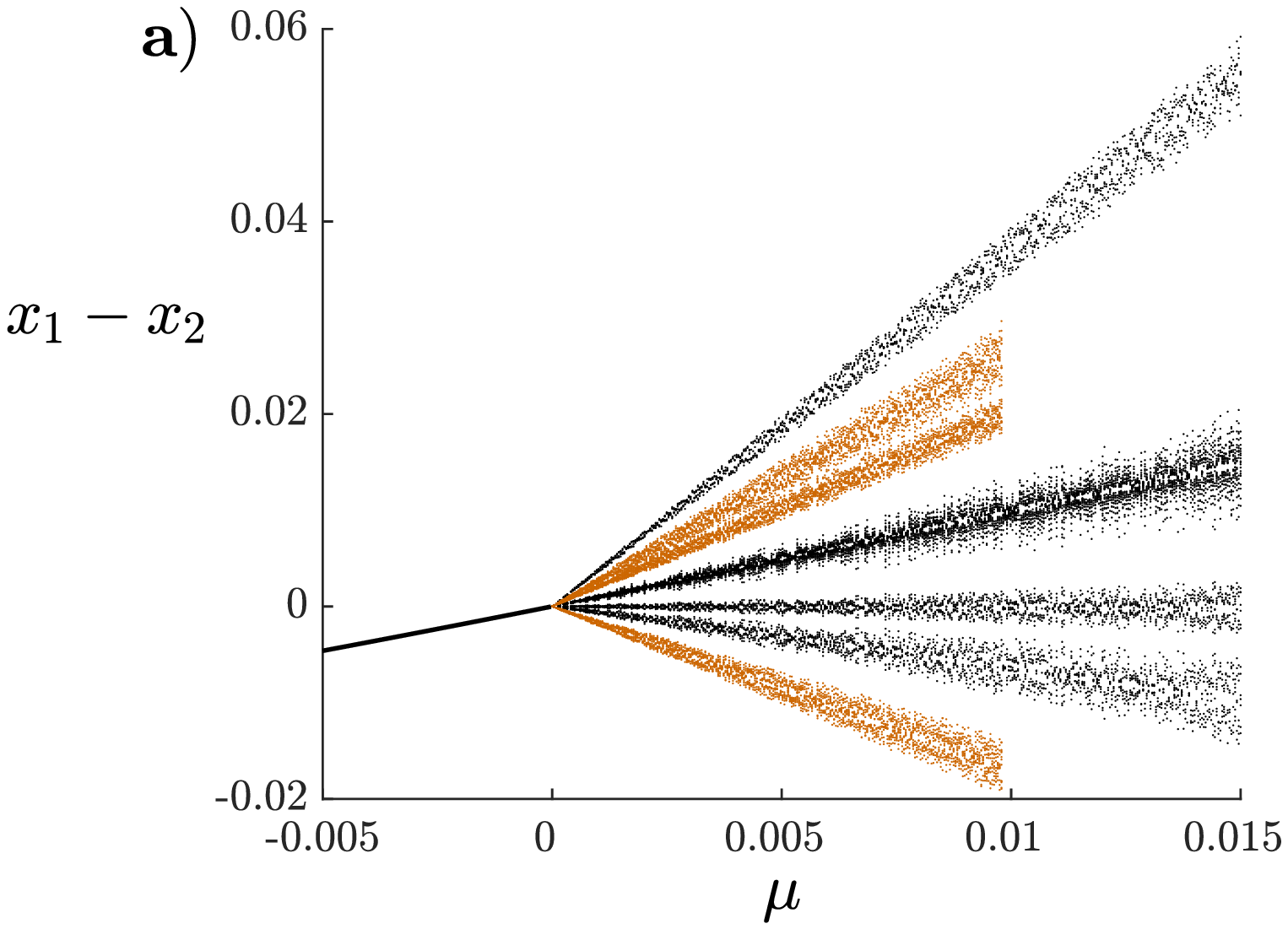}}
\put(7.5,0){\includegraphics[height=5cm]{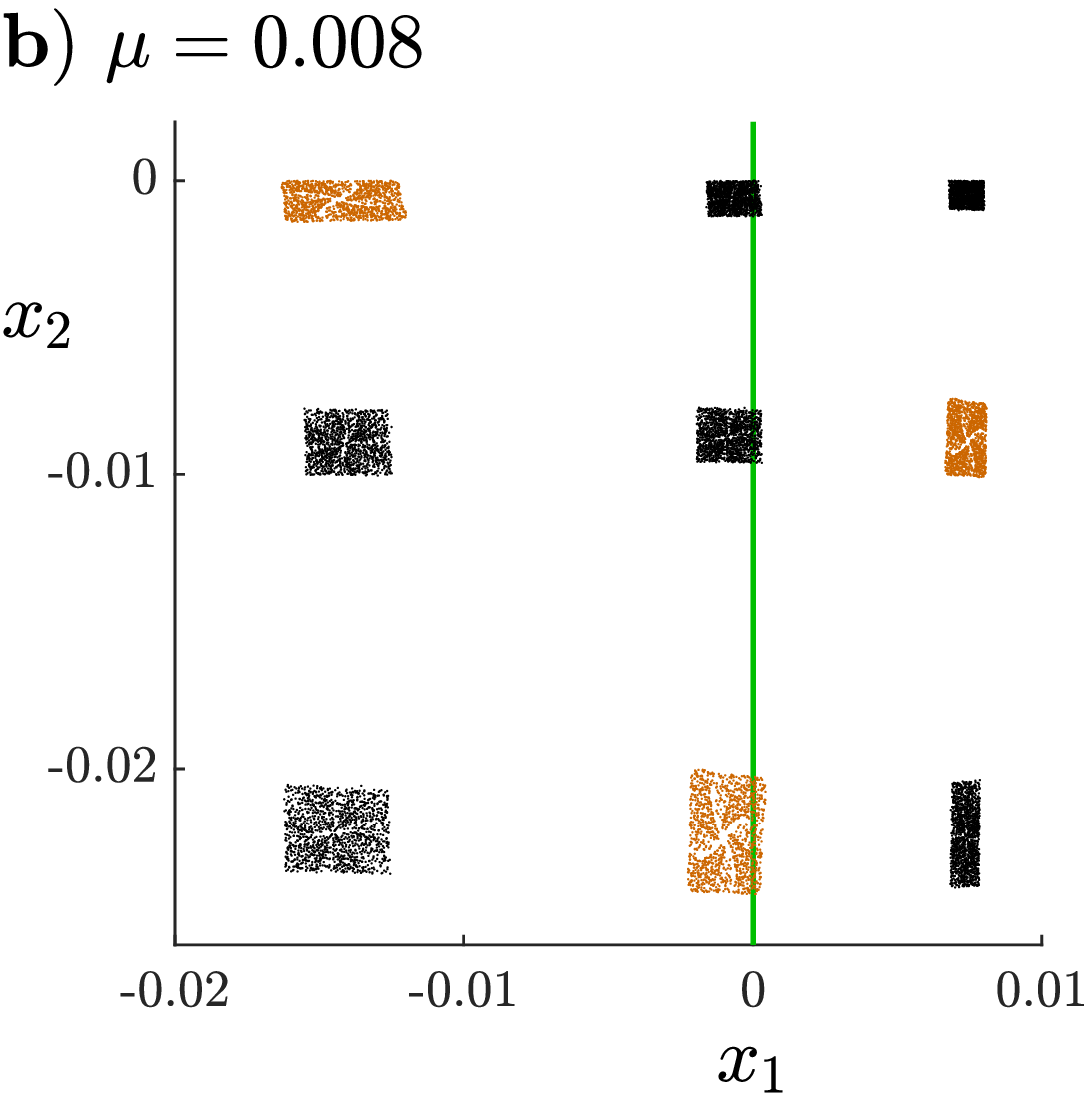}}
\end{picture}
\caption{
A numerically computed bifurcation diagram of \eqref{eq:2dbcnfWithHOTs} with
\eqref{eq:xiEx} and \eqref{eq:ELEREx}, and a phase portrait at $\mu = 0.008$.
The quantity plotted on the vertical axis of the bifurcation diagram
was chosen so that the two attractors can easily be distinguished.
\label{fig:exBifDiag}
} 
\end{center}
\end{figure}

We now consider BCBs in maps with more than two dimensions.
The form \eqref{eq:2dbcnfWithHOTs} generalises from $\mathbb{R}^2$ to $\mathbb{R}^n$ ($n \ge 2$) as
\begin{equation}
x \mapsto f(x;\mu) = \begin{cases}
C_L x + e_1 \mu + E_L(x;\mu), & x_1 \le 0, \\
C_R x + e_1 \mu + E_R(x;\mu), & x_1 \ge 0.
\end{cases}
\label{eq:ndbcnfWithHOTs}
\end{equation}
where again $E_L$ and $E_R$ are $C^1$ and $\co \left( \| x \| + |\mu| \right)$, and
\begin{align}
C_L &= \begin{bmatrix}
c^L_1 & 1 \\
c^L_2 && \ddots \\
\vdots &&& 1 \\
c^L_n
\end{bmatrix}, &
C_R &= \begin{bmatrix}
c^R_1 & 1 \\
c^R_2 && \ddots \\
\vdots &&& 1 \\
c^R_n
\end{bmatrix}
\label{eq:CLCR}
\end{align}
are companion matrices whose first columns are vectors $c^L, c^R \in \mathbb{R}^n$.
In \eqref{eq:ndbcnfWithHOTs} and throughout the paper we write
$e_j$ for the $j^{\rm th}$ standard basis vector of $\mathbb{R}^n$.
We now generalise Theorem \ref{th:2d} from $\mathbb{R}^2$ to $\mathbb{R}^n$.

\begin{theorem}
For all $k \ge 1$ there exists an open set $U \subset \mathbb{R}^n \times \mathbb{R}^n$
such that for any piecewise-$C^r$ ($r \ge 1$) map $f$ of the form \eqref{eq:ndbcnfWithHOTs} with $(c^L,c^R) \in U$,
there exists $\mu_0 > 0$ such that
\begin{enumerate}
\renewcommand{\labelenumi}{\roman{enumi})}
\item
for all $\mu \in (-\mu_0,0)$, $f$ has an asymptotically stable fixed point, and
\item
for all $\mu \in (0,\mu_0)$, $f$ has $N[k,n]$ disjoint trapping regions
on which $f^{k n}$ is piecewise-$C^r$ and expanding, where
\end{enumerate}
\begin{equation}
N[k,n] = \frac{1}{k n} \sum_{d|a} \varphi(d) k^{\frac{n}{d}},
\label{eq:Nformula}
\end{equation}
where $\varphi$ is Euler's totient function
and $a$ is the largest divisor of $n$ that is coprime to $k$.
\label{th:nd}
\end{theorem}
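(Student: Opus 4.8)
The plan is to assemble the theorem from the ingredients developed in Sections \ref{sec:perturb}--\ref{sec:expansion} according to the road-map laid out in the introduction, with the genericity (open-set) statements threaded through each step. First I would fix $k \ge 1$ and work with the ``simple form'' introduced in \S\ref{sec:perturb}, i.e.\ the choice of $(c^L,c^R)$ that makes the unperturbed $n^{\rm th}$ iterate $f^n$ (away from the switching manifold, on the appropriate side $\mu > 0$) a direct product of $n$ copies of a fixed skew tent map $g$. From \S\ref{sec:skewTentMaps} I would select the slope parameters of $g$ so that $g$ has a chaotic attractor consisting of exactly $k$ disjoint intervals; this pins down a specific point $(c^L_\star,c^R_\star)$. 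For $\mu < 0$ the stable-fixed-point conclusion (item i) is exactly the content of \S\ref{sec:stableFixedPoint}: the relevant eigenvalues of $C_L$ (or the appropriate branch) lie inside the unit disc, the fixed point is admissible, and asymptotic stability survives the addition of the $C^1$, $\co(\|x\|+|\mu|)$ terms $E_L,E_R$ — this is an open condition on $(c^L,c^R)$, giving one open set $U_1$.

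Next, for $\mu > 0$, I would run the trapping-region construction. From \S\ref{sec:trappingRegion}, fattening each of the $k$ invariant intervals of $g$ by a small amount yields $k$ intervals $I_1,\dots,I_k$ that are mapped by $g$ into the interiors of the (cyclically permuted) enlarged intervals. Taking Cartesian products $I_{j_1}\times\cdots\times I_{j_n}$ over $\{1,\dots,k\}^n$ gives boxes in $\mathbb{R}^n$; because the unperturbed $f^n$ acts coordinatewise as $g$ and $g$ cyclically permutes the $k$ intervals, $f^n$ permutes these boxes, and $f$ itself maps a box into (a neighbourhood of) another box — the precise combinatorics here is \S\ref{sec:symbolics}. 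Unions of boxes lying in a common orbit under this permutation action form the disjoint trapping regions. The count of these orbits is a Burnside's-lemma computation: the relevant group is generated by the ``shift by one interval in each coordinate'' together with the cyclic coordinate-permutation, acting on $\{1,\dots,k\}^n$, and \eqref{eq:Nformula} with $a$ the largest divisor of $n$ coprime to $k$ is exactly the fixed-point count that Burnside's lemma delivers; this is the content of \S\ref{sec:number}, which I would simply cite. Since the boxes have nonempty interiors and the ``mapped strictly inside'' inclusions are strict, the trapping-region property is robust: it persists for all $(c^L,c^R)$ in some open neighbourhood $U_2$ of $(c^L_\star,c^R_\star)$ and for all sufficiently small $\mu>0$ and sufficiently small perturbations $E_L,E_R$ (smallness of $E_L,E_R$ being automatic near the origin from the $\co$ condition, once $\mu_0$ is taken small enough).

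Then I would invoke \S\ref{sec:expansion}: on each trapping region, the iterate $f^{kn}$ (which maps each box back to itself, since $kn$ kills both the coordinatewise $k$-cycle and any coordinate permutation arising within the region) is piecewise-$C^r$ — the pieces being the intersections of the box with the finitely many regions on which successive iterates stay on one side of the switching manifold — and is expanding, because the derivative is a product of $kn$ matrices each of which, for the simple form, is essentially a companion matrix whose action on the relevant invariant cone expands by the skew-tent slopes, whose product of absolute values exceeds $1$ by the choice of $g$. Expansion is an open condition on the finitely many derivative bounds, so it survives on a further open set $U_3$ and for small $\mu>0$ and small higher-order terms. Taking $U = U_1 \cap U_2 \cap U_3$ (nonempty since it contains $(c^L_\star,c^R_\star)$) and $\mu_0$ the minimum of the thresholds from all three steps completes the proof.

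The main obstacle I expect is bookkeeping rather than any single hard estimate: one must verify that the \emph{same} open set $U$ and the \emph{same} $\mu_0$ work simultaneously for all three conclusions, and in particular that the finitely many strict inequalities coming from (a) admissibility and stability of the fixed point, (b) the ``$f(\text{box}) \subset \mathrm{int}(\text{box}')$'' inclusions, and (c) the uniform expansion bound, are all \emph{open} and compatible, and that the contribution of $E_L,E_R$ to each of them is uniformly controlled by the $\co(\|x\|+|\mu|)$ hypothesis on the (compact, shrinking-with-$\mu_0$) regions where the dynamics lives. A secondary subtlety is checking that $f^{kn}$, not merely $f^n$, is the correct iterate that fixes each trapping region — this is where the number-theoretic role of $a$ (equivalently, of $\gcd$-type constraints between $k$ and $n$) re-enters — and that the ``piecewise'' decomposition of $f^{kn}$ on a trapping region genuinely has finitely many pieces with piecewise-$C^r$ boundaries, which follows because only finitely many switching-manifold crossings can occur along an orbit segment of length $kn$ staying in a bounded region.
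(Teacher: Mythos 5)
Your overall architecture is the same as the paper's: fix $(a_L,a_R)$ in a region $S_k$ where the skew tent map has a $k$-band chaotic attractor, obtain the stable fixed point for $\mu<0$ from \S\ref{sec:stableFixedPoint}, build trapping regions for $\mu>0$ as unions of boxes permuted by the dynamics, count the unions with Burnside's lemma, establish expansion of $f^{kn}$, and intersect the finitely many open parameter sets while minimising the finitely many $\mu$-thresholds. Two points in your write-up, however, are errors in the constructions themselves rather than deferred bookkeeping.

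First, the group you name for the Burnside count is the wrong one. The dynamically relevant action is the cyclic group of order $kn$ generated by the single map $\psi(v)=(v_2,\ldots,v_n,v_1+1)$ of \eqref{eq:psi}, which rotates the coordinates while incrementing exactly one entry; this is forced by the fact that $f$ sends the box labelled $v$ into the box labelled $\psi(v)$, so a trapping region must be a union of boxes over a single $\psi$-orbit. The group you describe, generated separately by the coordinate rotation $\sigma(v)=(v_2,\ldots,v_n,v_1)$ and the increment $\tau(v)=v+(1,\ldots,1)$, is isomorphic to $\mathbb{Z}_n\times\mathbb{Z}_k$, which differs from $\langle\psi\rangle\cong\mathbb{Z}_{kn}$ whenever $\gcd(k,n)>1$ and has different orbits. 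For $k=n=2$ your group has the two orbits $\{(0,0),(1,1)\}$ and $\{(0,1),(1,0)\}$, whereas $\psi$ has the single orbit $(0,0)\to(0,1)\to(1,1)\to(1,0)$ and indeed $N[2,2]=1$. Since \eqref{eq:Nformula} is precisely the orbit count for $\langle\psi\rangle$, running Burnside's lemma on your group would not reproduce it.

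Second, taking the boxes to be Cartesian products of identical fattened intervals $I_{j_1}\times\cdots\times I_{j_n}$ does not yield trapping regions for $f$ in the sense of Definition \ref{df:trap}. The map $g$ acts as $(y_1,\ldots,y_n)\mapsto(y_2+1,y_3,\ldots,y_n,h(y_1)-1)$, so the middle coordinates of a box are merely passed along unchanged: the image of such a product meets the boundary of the next box rather than lying in its interior, and the strict inclusion $f(\Omega)\subset{\rm int}(\Omega)$ fails (and cannot be rescued by perturbation). This is exactly why the paper interposes the strictly nested intervals $K_{i,j}$ of \eqref{eq:K} in coordinates $2,\ldots,n$, so that Lemma \ref{le:boxImage} gives $g(\Phi_v)\subset{\rm int}\left(\Phi_{\psi(v)}\right)$ with room to spare. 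You correctly emphasise that the inclusions must be strict to survive the $E_L,E_R$ terms, but the construction you state does not achieve strictness. Both issues are repairable without changing your strategy; with $\psi$ and the $K_{i,j}$ in place your argument coincides with the paper's proof.
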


Theorem \ref{th:nd} is proved in \S\ref{sec:proof}.
In \eqref{eq:Nformula} the sum is over all divisors of $a$,
and $\varphi(d)$ is (by definition) the number of positive integers that are less than or equal to $d$ and coprime to $d$.
For example with $k = 10$ and $n = 6$, we have $a = 3$
whose divisors are $1$ and $3$.
Since $\varphi(1) = 1$ and $\varphi(3) = 2$, we have
\begin{equation}
N[10,6] = \frac{1}{60} \left( 1 \cdot 10^6 + 2 \cdot 10^2 \right) = 16670.
\nonumber
\end{equation}

\begin{table}[t!]
\caption{
Values of $N[k,n]$ \eqref{eq:Nformula}. 
\label{tb:N}
}
\begin{center}
\setlength\arrayrulewidth{.3mm}
\begin{tabular}{c|ccccc|}
\headercell{Number of\\bands, $k$} & \multicolumn{5}{c@{}}{Number of dimensions, $n$} \\
\cline{2-6}
\\[-4.72mm]  
& \cellcolor{gray!15}\parbox{12mm}{\centering 2}
& \cellcolor{gray!15}\parbox{12mm}{\centering 3}
& \cellcolor{gray!15}\parbox{12mm}{\centering 4}
& \cellcolor{gray!15}\parbox{12mm}{\centering 5}
& \cellcolor{gray!15}\parbox{12mm}{\centering 6} \\
\noalign{\hrule height .3mm}
\rule{0pt}{4mm}\cellcolor{gray!15}2 & 1 & 2 & 2 & 4 & 6 \\
\cellcolor{gray!15}3 & 2 & 3 & 8 & 17 & 42 \\
\cellcolor{gray!15}4 & 2 & 6 & 16 & 52 & 172 \\
\cellcolor{gray!15}5 & 3 & 9 & 33 & 125 & 527 \\
\cellcolor{gray!15}6 & 3 & 12 & 54 & 260 & 1296 \\
\cellcolor{gray!15}7 & 4 & 17 & 88 & 481 & 2812 \\
\cellcolor{gray!15}8 & 4 & 22 & 128 & 820 & 5464 \\
\cellcolor{gray!15}9 & 5 & 27 & 185 & 1313 & 9855 \\
\cellcolor{gray!15}10 & 5 & 34 & 250 & 2000 & 16670 \\
\noalign{\hrule height .3mm}
\end{tabular}
\end{center}
\end{table}

Table \ref{tb:N} lists the values of $N[k,n]$ for small values of $k$ and $n$.
If $k$ is a multiple of $n$, then $a=1$ and $N = \frac{k^{n-1}}{n}$. 
If $n$ is prime and $k$ is not a multiple of $n$, then $a = n$
has only two divisors: $1$ and $n$.
Here $\varphi(n) = n-1$ so $N = \frac{k^{n-1} + n - 1}{n}
= \left\lceil \frac{k^{n-1}}{n} \right\rceil$.
Interestingly this implies $k^{n-1} - 1$ is a multiple of $n$ which is Fermat's little theorem;
compare \cite{ChNo13,Go56}.
Also $N[k,2] = \left\lceil \frac{k}{2} \right\rceil$
and $N[2,n]$ is Sloane's integer sequence $A000016$ \cite{SlPl95}
which arises in coding problems \cite{Go17};
see \cite{St08} for its occurrence in another dynamical systems setting.

\section{Perturbations from a two-parameter family}
\label{sec:perturb}

In the absence of the nonlinear terms $E_L$ and $E_R$,
\eqref{eq:ndbcnfWithHOTs} is the $n$-dimensional border-collision normal form, first considered in \cite{Di03}.
In this section we introduce a two-parameter subfamily of the normal form about which perturbations will be taken.

Given parameters $a_L, a_R \in \mathbb{R}$ and $\sigma = \pm 1$, we form the $n$-dimensional map
\begin{equation}
g(y;a_L,a_R,\sigma) = \begin{cases}
A_L y + \sigma e_1 \,, & y_1 \le 0, \\
A_R y + \sigma e_1 \,, & y_1 \ge 0,
\end{cases}
\label{eq:simpleForm}
\end{equation}
where
\begin{align}
A_L &= \begin{bmatrix}
0 & 1 \\
\vdots && \ddots \\
0 &&& 1 \\
a_L
\end{bmatrix}, &
A_R &= \begin{bmatrix}
0 & 1 \\
\vdots && \ddots \\
0 &&& 1 \\
a_R
\end{bmatrix}.
\label{eq:ALAR}
\end{align}
The matrices $A_L$ and $A_R$ are companion matrices \eqref{eq:CLCR}
for which $c^L$ and $c^R$ are scalar multiples of $e_n$.
Specifically $c^L = d^L$ and $c^R = d^R$ where
\begin{align}
d^L &= a_L e_n \,, &
d^R &= a_R e_n \,.
\nonumber
\end{align}

Let us now explain the utility of \eqref{eq:simpleForm}.
To a map of the form \eqref{eq:ndbcnfWithHOTs}, we perform the spatial scaling $y = \frac{x}{|\mu|}$,
assuming $\mu \ne 0$, to produce the map
\begin{equation}
y \mapsto \frac{f(|\mu| y;\mu)}{|\mu|} = \tilde{f}(y;\mu).
\label{eq:scaledMapDefn}
\end{equation}
This map can be written as
\begin{equation}
\tilde{f}(y;\mu) = 
g(y;a_L,a_R,{\rm sgn}(\mu)) +
\begin{cases}
(c^L - d^L) y_1 + \frac{E_L(|\mu| y;\mu)}{|\mu|}, & y_1 \le 0, \\
(c^R - d^R) y_1 + \frac{E_R(|\mu| y;\mu)}{|\mu|}, & y_1 \ge 0.
\end{cases}
\label{eq:scaledMap}
\end{equation}
By choosing $\mu$ small and $c^L$ and $c^R$ close to $d^L$ and $d^R$,
we can make the difference between \eqref{eq:ndbcnfWithHOTs}, in scaled coordinates, as close to
the simple form \eqref{eq:simpleForm} as we like.
Formally we have the following result
that follows immediately from the assumption that $E_L$ and $E_R$
are $C^1$ and $\co \left( \| x \| + |\mu| \right)$.

\begin{lemma}
Let $\Omega \subset \mathbb{R}^n$ be bounded and $a_L, a_R \in \mathbb{R}$.
For all $\eta > 0$ there exists
a neighbourhood $U_1 \subset \mathbb{R}^n \times \mathbb{R}^n$ of $(d^L,d^R)$ such that
for any map $f$ of the form \eqref{eq:ndbcnfWithHOTs} with $(c^L,c^R) \in U_1$
there exists $\mu_1 > 0$ such that for all $\mu \in (\mu_1,0) \cup (0,\mu_1)$ and all $y \in \Omega$ we have
\begin{enumerate}
\renewcommand{\labelenumi}{\roman{enumi})}
\item
$\big\| \tilde{f}(y;\mu) - g(y;a_L,a_R,{\rm sgn}(\mu)) \big\| < \eta$,
\item
$\big\| \rD \tilde{f}(y;\mu) - A_L \big\| < \eta$ if $y_1 > 0$, and
\item
$\big\| \rD \tilde{f}(y;\mu) - A_R \big\| < \eta$ if $y_1 > 0$.
\end{enumerate}
\label{le:perturb}
\end{lemma}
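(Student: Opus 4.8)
The plan is to unwind the definitions in \eqref{eq:scaledMapDefn}--\eqref{eq:scaledMap} and reduce all three claims to the single hypothesis that $E_L, E_R$ are $C^1$ and $\co(\|x\|+|\mu|)$. First I would fix $\eta > 0$ and choose a radius $\rho > 0$ with $\Omega \subset \overline{B(0,\rho)}$, so that a point $x = |\mu| y$ with $y \in \Omega$ satisfies $\|x\| \le |\mu|\rho$. The key observation is that the scaled map \eqref{eq:scaledMap} differs from $g$ by the two pieces $(c^L - d^L)y_1 + \frac{E_L(|\mu|y;\mu)}{|\mu|}$ and its right-hand analogue, so it suffices to bound each of these (and their derivatives) by $\eta$ uniformly in $y \in \Omega$ for $|\mu|$ small.

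For claim (i): the linear part contributes $\|(c^L - d^L)y_1\| \le \|c^L - d^L\|\,\rho$, which is less than $\frac{\eta}{2}$ provided the neighbourhood $U_1$ of $(d^L,d^R)$ is taken small enough; this choice depends only on $\eta$ and $\rho$, not on $\mu$. For the error term, $\co(\|x\|+|\mu|)$ means $\|E_L(x;\mu)\| \le \psi(\|x\|+|\mu|)(\|x\|+|\mu|)$ for some function $\psi$ with $\psi(t) \to 0$ as $t \to 0^+$; hence $\frac{\|E_L(|\mu|y;\mu)\|}{|\mu|} \le \psi\big(|\mu|(\rho+1)\big)\,(\rho+1)$, which tends to $0$ as $\mu \to 0$ and so is below $\frac{\eta}{2}$ for all $|\mu| < \mu_1$ with $\mu_1$ depending on $\eta$ and $\rho$. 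Combining the two bounds and doing the same on the right piece gives (i). Claim (ii) and (iii) are handled the same way after differentiating \eqref{eq:scaledMap}: the linear piece contributes $(c^L - d^L)e_1^{\mathsf T}$ to the Jacobian on $\{y_1 > 0\}$ (wait, on $\{y_1<0\}$; but note (ii) as stated refers to $y_1>0$ with $A_L$, which I read as the intended left/right convention and will follow as written), with norm $\le \|c^L - d^L\|$, again controllable by shrinking $U_1$; and the error contributes $\rD_x E_L(|\mu|y;\mu)$, which is $\co(1)$ as $(x;\mu) \to 0$ because $E_L$ being $C^1$ and $\co(\|x\|+|\mu|)$ forces its derivative to vanish at the origin (this is the standard fact that a $C^1$ function with a superlinear zero at a point has vanishing derivative there, applied along rays). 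Choosing $\mu_1$ smaller still if necessary makes this term $< \frac{\eta}{2}$ uniformly on $\Omega$, giving (ii) and (iii). Since only finitely many smallness conditions on $U_1$ and $\mu_1$ are imposed, and each depends only on $\eta$, $\rho$, and the fixed $a_L,a_R$, the intersection is again a neighbourhood and a positive number, completing the proof.

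I do not expect a serious obstacle here, since the lemma is essentially a bookkeeping consequence of the $C^1$ and $\co$ hypotheses; the only point requiring a little care is the derivative estimate, namely justifying that $\rD_x E_L(x;\mu) \to 0$ as $(x;\mu)\to 0$. This does not follow formally from $\|E_L\| = \co(\|x\|+|\mu|)$ alone for a general function, but it does once $E_L$ is additionally assumed $C^1$ on a neighbourhood of the origin, which is part of the standing hypothesis; the quickest argument is that the $C^1$ extension means $\rD_x E_L$ is continuous, and if it did not vanish at $0$ one could produce a sequence along which $\|E_L(x;\mu)\|$ grows linearly in $\|x\|$, contradicting the $\co$ bound. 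With that settled, everything else is the triangle inequality and taking $\mu$ small.
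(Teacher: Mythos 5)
Your proof is correct, and it supplies exactly the details the paper omits: the paper states Lemma \ref{le:perturb} without proof, asserting it ``follows immediately'' from the $C^1$ and $\co\left(\|x\|+|\mu|\right)$ hypotheses, and your unwinding of \eqref{eq:scaledMap} via the triangle inequality is the intended justification. You also correctly isolate and resolve the one non-formal point --- that $\rD_x E_L(x;\mu)\to 0$ as $(x;\mu)\to(0;0)$ requires combining the $C^1$ hypothesis (continuity of the derivative) with the little-$\co$ bound (which forces the derivative to vanish at the origin) --- and you rightly flag the typo in item (ii), where ``$y_1>0$'' should read ``$y_1<0$''.
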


\section{A stable fixed point for $\mu < 0$}
\label{sec:stableFixedPoint}

In this section we generalise Section 2 of Glendinning \cite{Gl15b} to accommodate higher order terms.

\begin{lemma}
Let $a_L, a_R \in \mathbb{R}$ with $|a_L| < 1$.
There exists a neighbourhood $U_2 \subset \mathbb{R}^n \times \mathbb{R}^n$ of $(d^L,d^R)$ such that
for any map $f$ of the form \eqref{eq:ndbcnfWithHOTs} with $(c^L,c^R) \in U_2$
there exists $\mu_2 > 0$ such that for all $\mu \in (-\mu_2,0)$
the map $f$ has an asymptotically stable fixed point.
\label{le:stableFixedPoint}
\end{lemma}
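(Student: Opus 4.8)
The plan is to locate the relevant fixed point explicitly in the unscaled coordinates, verify it lies in the correct half-space for $\mu < 0$, and then control its stability by a continuity/perturbation argument based on Lemma \ref{le:perturb}. First I would work with the ``left'' branch of \eqref{eq:ndbcnfWithHOTs}, namely $x \mapsto C_L x + e_1 \mu + E_L(x;\mu)$, and seek a fixed point $x^*(\mu)$ near the origin. Since $|a_L| < 1$ and $c^L$ is close to $d^L = a_L e_n$, the matrix $C_L$ is close to $A_L$; as $A_L$ is a companion matrix with characteristic polynomial $t^n - a_L$, all its eigenvalues have modulus $|a_L|^{1/n} < 1$, so $I - C_L$ is invertible for $(c^L,c^R)$ in a small enough neighbourhood $U_2$ of $(d^L,d^R)$. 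The fixed-point equation $x = C_L x + e_1 \mu + E_L(x;\mu)$ can then be written as $x = (I-C_L)^{-1}\bigl(e_1 \mu + E_L(x;\mu)\bigr)$, and because $E_L$ is $C^1$ and $\co(\|x\|+|\mu|)$ the right-hand side is a contraction on a small ball whose radius I can take $\propto |\mu|$; the contraction mapping theorem (or the implicit function theorem applied to $x - C_L x - e_1\mu - E_L(x;\mu) = 0$ at $(0;0)$) gives a unique such fixed point $x^*(\mu)$ with $x^*(\mu) \to 0$ and in fact $\|x^*(\mu)\| = O(|\mu|)$ as $\mu \to 0$.

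Next I would check the \emph{admissibility} of this fixed point, i.e.\ that it actually satisfies $x^*_1(\mu) \le 0$ so that the left branch is the operative one. To leading order $x^*(\mu) = \mu (I - A_L)^{-1} e_1 + o(|\mu|)$, and one computes $(I-A_L)^{-1} e_1$ explicitly from the companion structure: solving $(I-A_L) v = e_1$ gives $v_1 = \tfrac{1}{1 - a_L}$ together with $v_2 = \dots = v_n = \tfrac{a_L}{1-a_L}$ (or a similarly simple closed form), so the first component of $x^*(\mu)$ is $\tfrac{\mu}{1-a_L} + o(|\mu|)$. Since $|a_L| < 1$ we have $1 - a_L > 0$, hence for $\mu \in (-\mu_2, 0)$ with $\mu_2$ small this first component is negative, confirming admissibility. (Shrinking $U_2$ and $\mu_2$ if necessary absorbs the effect of $c^L \ne d^L$ on this sign computation, since the map $(c^L,\mu) \mapsto x^*_1$ is continuous and equals $\tfrac{\mu}{1-a_L}+o(|\mu|)$ uniformly.)

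Finally, for stability I would invoke the fact that the eigenvalues of a matrix depend continuously on its entries. The Jacobian of $f$ at $x^*(\mu)$ on the left branch is $C_L + \rD E_L(x^*(\mu);\mu)$; as $\mu \to 0$ and $(c^L,c^R) \to (d^L,d^R)$ this converges to $A_L$, whose spectral radius is $|a_L|^{1/n} < 1$. Hence there is a neighbourhood $U_2$ of $(d^L,d^R)$ and a threshold $\mu_2 > 0$ such that for $(c^L,c^R) \in U_2$ and $\mu \in (-\mu_2,0)$ the spectral radius of the Jacobian at $x^*(\mu)$ stays strictly below $1$, which makes $x^*(\mu)$ asymptotically stable. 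One can make the continuity quantitative by using Lemma \ref{le:perturb} in scaled coordinates on a fixed bounded $\Omega$ containing the scaled fixed point $y^* = x^*/|\mu|$, so that $\rD \tilde f$ is within $\eta$ of $A_L$; since conjugacy by the scaling does not change eigenvalues, the stability conclusion transfers back to $f$. The main obstacle, and the only place requiring care, is bookkeeping the two independent limits — $\mu \to 0^-$ and $(c^L,c^R) \to (d^L,d^R)$ — simultaneously and uniformly; once $U_2$ is chosen first (controlling $\|C_L - A_L\|$ and keeping $I - C_L$ invertible) and $\mu_2$ is chosen afterwards depending on $U_2$, both the admissibility sign and the spectral-radius bound follow from ordinary continuity estimates.
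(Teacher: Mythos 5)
Your proposal is correct and takes essentially the same route as the paper: the paper works in the scaled coordinates $y = x/|\mu|$, writes down the same fixed point $y^* = -(I-A_L)^{-1}e_1$ of the left piece of the simple form with $\sigma = -1$, checks admissibility from the sign of its first component and stability from the eigenvalues $\lambda^n = a_L$, and then invokes persistence of hyperbolic fixed points via Lemma~\ref{le:perturb}. Your unscaled implicit-function-theorem formulation is just a repackaging of that same argument, with identical key computations.
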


\begin{proof}
The left piece of $g(y;a_L,a_R,-1)$ has the unique fixed point
\begin{equation}
y^* = \frac{-1}{1 - a_L} \begin{bmatrix} 1 \\ a_L \\ a_L \\ \vdots \\ a_L \end{bmatrix}.
\label{eq:yStar}
\end{equation}
The first component of $y^*$ is negative (because $a_L < 1$) thus $y^*$ is {\em admissible},
i.e.~it is a fixed point of $g$.
The stability multipliers associated with $y^*$ are the eigenvalues of $A_L$.
Since $\det \left( \lambda I - A_L \right) = \lambda^n - a_L$,
these eigenvalues are the $n^{\rm th}$ roots of $a_L$,
so all have modulus less than $1$ (because $|a_L| < 1$).
Thus $y^*$ is asymptotically stable and hyperbolic,
so persists for maps that are a sufficiently small perturbation of \eqref{eq:simpleForm}.
Thus the result follows from Lemma \ref{le:perturb}.
\end{proof}

\section{A direct product of skew tent maps}
\label{sec:directProduct}

For the remainder of the paper we consider $g$ with $\sigma = 1$.
Here we show that the $n^{\rm th}$ iterate of $g$ is conjugate to a direct product of skew tent maps.
To this end, we let
\begin{equation}
h(z;a_L,a_R) = \begin{cases}
a_L z + 1, & z \le 0, \\
a_R z + 1, & z \ge 0,
\end{cases}
\label{eq:skewTentMap}
\end{equation}
be a family of skew tent maps.

\begin{lemma}
The $n^{\rm th}$ iterate of \eqref{eq:simpleForm} with $\sigma = 1$ is
\begin{equation}
g^n(x;a_L,a_R,1) = \begin{bmatrix}
h(x_1;a_L,a_R) \\
h(x_2+1;a_L,a_R)-1 \\
\vdots \\
h(x_n+1;a_L,a_R)-1 \\
\end{bmatrix}.
\label{eq:directProduct}
\end{equation}
\label{le:directProduct}
\end{lemma}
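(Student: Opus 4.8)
The plan is to compute $g^n$ by tracking how a point propagates through the $n$ nonlinear pieces, exploiting the special structure of the companion matrices $A_L$ and $A_R$. First I would observe that for $x_1 \le 0$ the map $g$ sends $x = (x_1, x_2, \dots, x_n)$ to $(x_2 + 1, x_3, \dots, x_n, a_L x_1)$, and for $x_1 \ge 0$ to $(x_2 + 1, x_3, \dots, x_n, a_R x_1)$. So in either case the first $n-1$ new components are just a shift of the old components $x_2, \dots, x_n$ with a $+1$ added to the very first slot, and the new last component is $a_L x_1$ or $a_R x_1$ depending on the sign of $x_1$. Writing $h$ as in \eqref{eq:skewTentMap}, the last component is exactly $h(x_1; a_L, a_R) - 1$ when we also note the "$+1$" that $h$ carries; I would keep careful track of these shifts by $1$.

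The cleanest route is to iterate $n$ times and watch a fixed coordinate, say the $j^{\rm th}$ coordinate of $g^n(x)$. Because each application of $g$ shifts coordinates down by one index and injects the switching nonlinearity only into the last slot, the value that lands in position $j$ of $g^n(x)$ is obtained by starting from $x_j$ (for $j \ge 2$) or $x_1$ (for $j = 1$), waiting until that coordinate reaches the bottom slot, applying the multiply-by-$a_L$-or-$a_R$ map once at that moment, and then letting it ride back up while collecting the additive $+1$ contributions from the first-slot injections. Concretely, I would prove by a short induction on $m = 1, \dots, n$ that the components of $g^m(x)$ are: the original components $x_{m+1}, \dots, x_n$ (shifted up, unchanged except possibly for a $+1$ in the first of them), followed by the already-processed components $h(x_1) , h(x_2+1)-1+1, \dots$ in the appropriate slots. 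Tracking the bookkeeping of the "$+1$" offsets is the one place that requires care: the coordinate $x_1$ picks up no offset before being hit (it sits in the first slot only at the very start, but the "$+1$" is added to $x_2$'s slot, not $x_1$'s, because the injection happens to the component that is about to become the new first coordinate), whereas $x_2, \dots, x_n$ each sit in the first slot exactly once during the $n$ iterations — at which point they receive a $+1$ — and this is precisely the "$+1$" inside $h(x_j + 1)$; then after being multiplied they must ride back up and at the end we subtract the $1$ that was added, giving $h(x_j + 1) - 1$. This explains the asymmetry between the first row and the remaining rows of \eqref{eq:directProduct}.

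Alternatively, and perhaps more transparently, I would conjugate by the affine shift $z = x + (0, 1, 1, \dots, 1)^{\mathsf T}$, i.e.\ $x_1 \mapsto x_1$ and $x_j \mapsto x_j + 1$ for $j \ge 2$, which is exactly the substitution suggested by the form of the claim. Under a suitable such conjugacy the map $g$ becomes, on each coordinate independently after $n$ steps, a copy of $h$; one checks that the switching condition $x_1 \le 0$ versus $x_1 \ge 0$ is compatible with this because after the conjugacy each coordinate's sign governs its own skew-tent dynamics, and the shift structure of the companion matrix decouples the coordinates once we iterate a full cycle of length $n$. Either way, the verification reduces to a finite computation — composing $n$ explicitly given piecewise-linear maps — so there is no genuine obstacle; the only thing to watch is the consistency of the switching regions through the $n$ compositions, i.e.\ confirming that the sign of the coordinate in the bottom slot at step $m$ is indeed the sign of the corresponding original coordinate (after the shift), so that "$a_L$ if $\le 0$, $a_R$ if $\ge 0$" matches the definition of $h$. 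I would state and check this sign-tracking as the key sub-claim, then assemble the $n$ output coordinates into \eqref{eq:directProduct}.
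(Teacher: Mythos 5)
Your argument is correct and is essentially the paper's own proof: the paper likewise tracks a single value as it is multiplied by $a_L$ or $a_R$ into the bottom slot and then rides back up through the shift structure, collecting the $+1$ only when it enters slot $1$ (it does this explicitly for the first component and notes the rest are similar). The one imprecision is in your sketched induction hypothesis for $g^m(x)$ --- the already-processed entry sitting in the lower slots is $h(x_1)-1$, not $h(x_1)$, until it reaches slot $1$ --- but your coordinate-by-coordinate bookkeeping and sign-tracking are otherwise exactly right.
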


\begin{proof}
The last component of $g(x)$ is $a_L x_1$ if $x_1 \le 0$ and $a_R x_1$ otherwise.
That is $g(x)_n = h(x_1) - 1$.
By further iterating under $g$ we obtain
$h(x_1)-1 = g^2(x)_{n-1} = g^3(x)_{n-2} = \cdots = g^{n-1}(x)_2$.
Iterating one more time gives $g^n(x)_1 = h(x_1)$
which verifies the first component of \eqref{eq:directProduct}.
The remaining components can be verified similarly.
\end{proof}

\section{A divison of the parameter space of skew tent maps}
\label{sec:skewTentMaps}

A detailed analysis of the skew tent map family \eqref{eq:skewTentMap} was done in Ito {\em et.~al.}~\cite{ItTa79b},
see also Maistrenko {\em et.~al.}~\cite{MaMa93}.
For any $a_L, a_R \in \mathbb{R}$, \eqref{eq:skewTentMap}
has non-negative Schwarzian derivative almost everywhere so has at most one attractor \cite{CoEc80}.
Fig.~\ref{fig:zccbifSet1dBCNF} catagorises this attractor throughout the $(a_L,a_R)$-parameter plane.
It contains regions $P_k$, for $k \ge 1$, where there exists a stable period-$k$ solution.
It also contains regions $Q_k$, for $k = 1$ and $k = 2^\ell$ for $\ell \ge 3$,
regions $R_k$, for even $k \ge 4$, and regions $S_k$, for $k \ge 2$,
where there exists a chaotic attractor consisting of $k$ disjoint closed intervals.

As evident in Fig.~\ref{fig:zccbifSet1dBCNF}, each $P_k$ is situated below $P_{k-1}$.
For all $k \ge 3$, $R_{2k}$ is narrow and located immediately to the right of $P_k$,
while $S_k$ is similarly narrow and located immediately to the right of $R_{2k}$.

In this paper we perturb about instances of \eqref{eq:simpleForm}
for which the pair $(a_L,a_R)$ belongs to some region $S_k$.
In comparison Wong and Yang \cite{WoYa19}
use $(a_L,a_R) = (0.5,-2)$, which lies on the boundary of $P_2$ and $R_4$,
while Glendinning allows any $\frac{6}{11} < a_L < \frac{10}{11}$ and $a_R = -2$,
which includes points in $R_4$, $S_2$, and $Q_1$.
All constructions use $|a_L| < 1$ to ensure a stable fixed point for small $\mu < 0$ by Lemma \ref{le:stableFixedPoint}.

\begin{figure}[h!]
\begin{center}
\includegraphics[height=9cm]{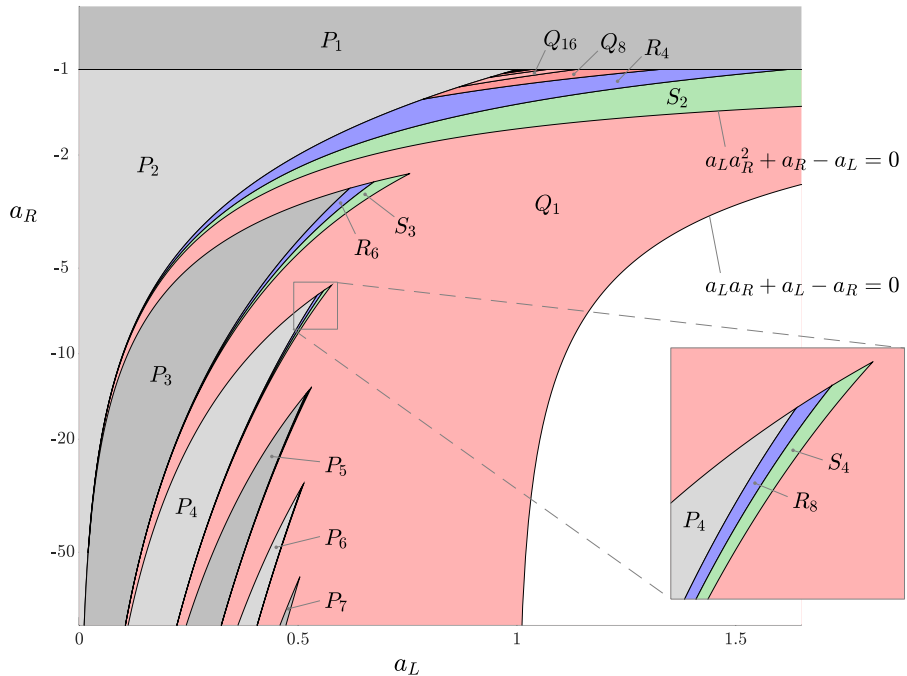}
\caption{
A two-parameter bifurcation diagram of the skew tent map family \eqref{eq:skewTentMap}.
The regions $P_k$ are where the map has a stable period-$k$ solution,
while regions $Q_k$, $R_k$, and $S_k$ are where the map has a chaotic attractor comprised of $k$ disjoint intervals.
\label{fig:zccbifSet1dBCNF}
} 
\end{center}
\end{figure}

Each $S_k$ is bounded by three smooth curves.
The upper boundary curve is where $h^k(0) = 0$ and is given by
\begin{equation}
a_R = -\frac{1 - a_L^{k-1}}{(1-a_L) a_L^{k-2}}.
\label{eq:upper}
\end{equation}
The left and right boundary curves of $S_k$ are given by
\begin{align}
a_L^{2 k - 2} a_R^3 + a_L - a_R &= 0, \label{eq:left} \\
a_L^{k - 1} a_R^2 + a_R - a_L &= 0, \label{eq:right}
\end{align}
respectively.
Some explanation for these is provided below.

\begin{lemma}
For any $(a_L,a_R) \in S_k$ ($k \ge 2$),
\begin{equation}
\begin{split}
&h^2(0) < h^{k+2}(0) < h^3(0) < h^{k+3}(0) < \cdots < h^{2k-1}(0) < h^k(0) < 0 \\
&\text{and} \quad 0 < h^{2k}(0) < h^{k+1}(0) < h^{2k+1}(0) < h(0).
\end{split}
\label{eq:ordering}
\end{equation}
\label{le:ordering}
\end{lemma}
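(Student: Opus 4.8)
The plan is to exploit the structure of the skew tent map on the interval $[h^2(0),h(0)]$, which (for $(a_L,a_R)\in S_k$) is forward-invariant and contains the chaotic attractor. The key observation is that the itinerary of the critical point $0$ under $h$ is governed by a handful of algebraic conditions, namely \eqref{eq:upper}, \eqref{eq:left}, \eqref{eq:right}, which cut out $S_k$. First I would record the elementary facts about $h$ itself: since $a_L>0$ on $S_k$ (which should be checked from the figure/defining curves), the left branch $z\mapsto a_L z+1$ is increasing, whereas the right branch $z\mapsto a_R z+1$ is decreasing because $a_R<0$ there. Consequently $h$ maps $(-\infty,0]$ increasingly onto $(-\infty,1]$ and $[0,\infty)$ decreasingly onto $(-\infty,1]$, and $h(0)=1$ is the global maximum. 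I would also note $h^2(0)=a_R+1<0$ and that $h^2(0)$ is the global minimum of $h$ on the invariant interval.

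Next I would set up an induction that simultaneously tracks the sign of $h^j(0)$ and the relative order of the relevant iterates. The skeleton is: on $S_k$ the orbit of $0$ visits the negative half-line for the indices $2,3,\dots,k$ (giving the first chain in \eqref{eq:ordering}), lands just past $0$ at step $k+1$ — more precisely $h^{k+1}(0)>0$ because $h^k(0)<0$ and $h^k(0)$ is close to (but above) the fixed point of the left branch near the upper boundary — and so on, cycling with period related to $k$. Because the left branch is an increasing affine map, applying it preserves order; because the right branch is decreasing affine, applying it reverses order. So each inequality in \eqref{eq:ordering} propagates to the next by applying either $h|_L$ (order-preserving) or $h|_R$ (order-reversing) to a previously established inequality, and the crossing of $0$ is exactly where one switches branch. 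Concretely: from $h^2(0)<h^3(0)$ (to be shown directly from $a_R+1 < a_L(a_R+1)+1$, i.e.\ from $a_L<1$ together with $a_R+1<0$) one applies $h|_L$ repeatedly to climb up $h^2<h^3<\dots<h^k<0$; then applying $h|_R$ to $h^{k-1}(0)<h^k(0)<0$ reverses it to $h^k(0)<h^{k+1}$ wait — one must be careful which iterate sits where, so the bookkeeping is to phrase the induction as: for $2\le j\le k-1$, $h^j(0)<h^{j+k}(0)<h^{j+1}(0)$, proved by feeding the $j$-th instance through the appropriate branch and using the defining equation of the boundary curve to control the "wrap-around" term $h^{j+k}(0)$.

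The main obstacle, and where real work is needed, is pinning down the two "interleaving" facts: that $h^{j+k}(0)$ genuinely lies strictly between $h^j(0)$ and $h^{j+1}(0)$ (rather than coinciding with an endpoint or overshooting), and that all the claimed signs are strict. These are precisely the statements that fail on $\partial S_k$ — the upper curve \eqref{eq:upper} is $h^k(0)=0$, and the left/right curves \eqref{eq:left}, \eqref{eq:right} are where two of the iterates in \eqref{eq:ordering} collide — so strictness is equivalent to $(a_L,a_R)$ lying in the open region $S_k$, not on its boundary. I would therefore argue by a continuity/monotonicity argument: express $h^{j+k}(0)-h^j(0)$ and $h^{j+1}(0)-h^{j+k}(0)$ as explicit rational functions of $(a_L,a_R)$ (they are, since every iterate of $0$ with a fixed itinerary is affine in the parameters along each branch choice), show each is a nonzero multiple of one of the boundary polynomials \eqref{eq:upper}–\eqref{eq:right}, and check the sign at one interior reference point of $S_k$ (e.g.\ a point near the $R_{2k}$–$S_k$ interface, or any convenient sample), so that by connectedness of $S_k$ the sign is constant throughout. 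The only genuinely delicate point is confirming the itinerary of $0$ is constant on all of $S_k$ — i.e.\ that no $h^j(0)$ for $j\le 2k+1$ changes sign in the interior — which again follows because a sign change would put the parameter on one of the three boundary curves; invoking Lemma \ref{le:ordering}'s own conclusion would be circular, so here I would instead use the description of $S_k$ from \cite{ItTa79b,MaMa93} that $S_k$ is exactly the locus where $0$ has the itinerary encoded by \eqref{eq:ordering}, or else verify it directly from \eqref{eq:upper}–\eqref{eq:right} by the same one-reference-point argument.
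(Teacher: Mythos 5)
Your overall strategy --- fix the sign pattern of the orbit of $0$, push inequalities forward through the increasing left branch and the decreasing right branch, and settle the remaining strict inequalities by a non-vanishing-plus-connectedness argument on $S_k$ --- is sensible, and it matches the spirit of the paper, which gives no detailed proof of this lemma but defers to \cite{ItTa79b,MaMa93} and asserts the ordering is derivable from \eqref{eq:upper}--\eqref{eq:right}. The genuine gap is in the mechanism you propose for the key non-vanishing step. You assert that strictness of \eqref{eq:ordering} is \emph{equivalent} to $(a_L,a_R)$ lying in the open region $S_k$, and that each relevant difference of iterates is a nonzero multiple of one of the three boundary polynomials \eqref{eq:upper}--\eqref{eq:right}. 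Both claims are false. The paper itself remarks that the ordering \eqref{eq:ordering} holds throughout $R_{2k}$, which lies on the other side of the left boundary \eqref{eq:left} of $S_k$ (that curve is a transitivity boundary for the renormalised map, not a boundary at which intervals collide), so strictness is not equivalent to membership in $S_k$ and no inequality of \eqref{eq:ordering} degenerates on \eqref{eq:left}. Conversely, several of the inequalities degenerate on curves that are not among \eqref{eq:upper}--\eqref{eq:right} at all: for instance $h^{k+1}(0)>0$ fails on $a_R=-\frac{1-a_L^{k}}{(1-a_L)\,a_L^{k-1}}$, which is \eqref{eq:upper} with $k$ replaced by $k+1$ and lies strictly below $S_k$ without being one of its boundaries, and similarly for $h^{2k}(0)>0$. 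So the factorisation you want does not exist, and the one-reference-point argument is left without its essential input: for each difference you must still prove directly that it is nonzero on all of $S_k$, which is precisely the tedious computation the lemma amounts to.

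A secondary problem is the branch bookkeeping in the propagation step. An inequality between two \emph{negative} iterates is pushed forward by the left branch (order-preserving), not by $h|_R$ as you write when passing from $h^{k-1}(0)<h^{k}(0)<0$; you notice the difficulty mid-sentence but never resolve which branch acts at which index, and the inductive chain $h^{j}(0)<h^{j+k}(0)<h^{j+1}(0)$ hinges on switching between the order-preserving and order-reversing branches exactly where the orbit changes sign (the orbit is negative for $2\le j\le k$ and $k+2\le j\le 2k-1$, positive for $j\in\{1,k+1,2k,2k+1\}$). To complete the argument you would need either to import the itinerary and ordering from \cite{ItTa79b,MaMa93}, as the paper effectively does, or to write each $h^{j}(0)$ for $2\le j\le 2k+1$ explicitly as a polynomial in $(a_L,a_R)$ under the forced sign pattern and verify every inequality in \eqref{eq:ordering} from $0<a_L<1$, $a_R<-1$, $h^{k}(0)<0$ and the side of \eqref{eq:right} on which $S_k$ lies.
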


\begin{figure}[h!]
\begin{center}
\includegraphics[height=9cm]{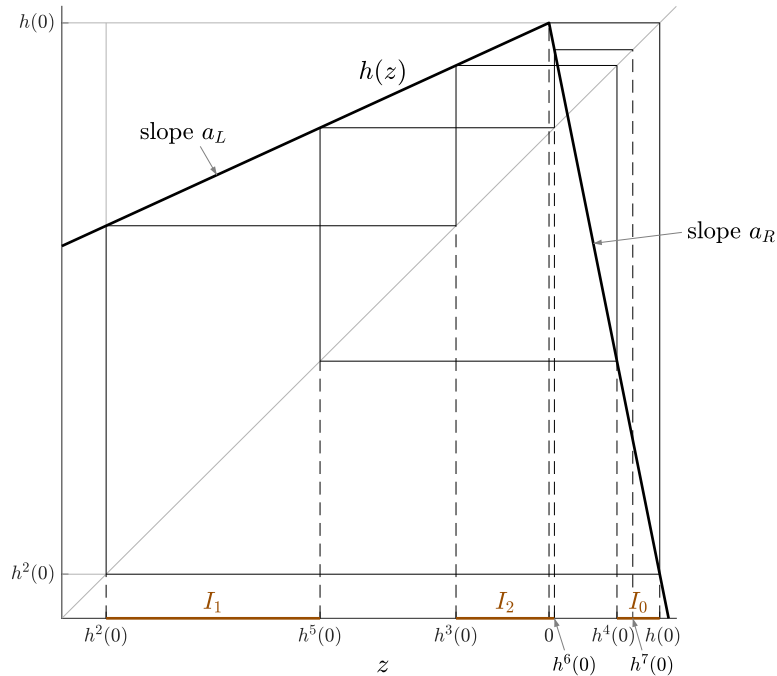}
\caption{
A cobweb diagram of the skew tent map \eqref{eq:skewTentMap} with $(a_L,a_R) \in S_3$.
The forward orbit of the origin $h^i(0)$ is indicated for all $0 \le i \le 7$.
This orbit is used to define intervals $I_0$, $I_1$, and $I_2$ by \eqref{eq:Ii} with $k = 3$.
\label{fig:zccCobwebSchem}
} 
\end{center}
\end{figure}

The ordering \eqref{eq:ordering} is illustrated for $k=3$ in Fig.~\ref{fig:zccCobwebSchem}.
Lemma \ref{le:ordering} is a consequence of calculations done in \cite{ItTa79b,MaMa93}
and, although a little tedious, is not difficult to derive directly from \eqref{eq:upper}--\eqref{eq:right}.
Note the ordering \eqref{eq:ordering} also holds throughout $R_{2 k}$.

We now use the forward orbit of $0$ to define intervals
\begin{equation}
\begin{split}
I_0 &= \left[ h^{k+1}(0), h(0) \right], \\
I_i &= \left[ h^{i+1}(0), h^{i+k+1}(0) \right], ~\text{for all $i = 1,\ldots,k-1$},
\end{split}
\label{eq:Ii}
\end{equation}
see again Fig.~\ref{fig:zccCobwebSchem}.
The next result follows immediately from \eqref{eq:ordering}.

\begin{lemma}
For any $(a_L,a_R) \in S_k$ ($k \ge 2$),
the intervals \eqref{eq:Ii} are mutually disjoint and
$h(I_i) = I_{i+1 \modSmall k}$ for all $i = 0,1,\ldots,k-1$.
\label{le:Ii}
\end{lemma}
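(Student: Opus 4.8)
The plan is to deduce both assertions directly from the ordering \eqref{eq:ordering}, using in addition the elementary fact that on $S_k$ one has $0 < a_L < 1$ and $a_R < 0$ (visible in Fig.~\ref{fig:zccbifSet1dBCNF}, and in fact already forced by \eqref{eq:ordering}: $a_R < 0$ follows from $h^2(0) < h^{k+2}(0)$ together with $0 < h^{k+1}(0) < h(0)$, and $a_L > 0$ from $h^3(0) < h^{k+3}(0)$ together with $h^2(0) < h^{k+2}(0) < 0$). Consequently the skew tent map $h$ is affine and strictly increasing on $(-\infty,0]$, affine and strictly decreasing on $[0,\infty)$, and takes its maximum value $h(0)=1$ at the break point $z=0$; these are the only structural facts about $h$ that the argument will use.

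First I would establish disjointness. Reading \eqref{eq:ordering} as a single chain and deleting the interior point $h^{2k+1}(0)$ leaves the $2k$ numbers that are the endpoints of $I_1,\dots,I_{k-1},I_0$, listed in increasing order (for $k\ge3$):
\[
h^2(0)<h^{k+2}(0)<h^3(0)<h^{k+3}(0)<\cdots<h^{2k-1}(0)<h^k(0)<h^{2k}(0)<h^{k+1}(0)<h(0).
\]
Here $I_i=[h^{i+1}(0),h^{i+k+1}(0)]$ really is an interval with $h^{i+1}(0)$ on the left and $h^{i+k+1}(0)$ on the right, and in the displayed order the intervals $I_1,\dots,I_{k-1},I_0$ appear consecutively with no overlap; this is exactly the claimed disjointness. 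The same reading records three facts I will need below: $I_1,\dots,I_{k-2}\subset(-\infty,0)$, $I_0\subset(0,\infty)$, and $I_{k-1}=[h^k(0),h^{2k}(0)]$ is the unique one of these intervals that contains the break point $0$ in its interior.

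Next I would verify $h(I_i)=I_{i+1 \modSmall k}$ by cases, the point being that the endpoints of each $I_i$ are consecutive points of the forward orbit of $0$ and that $h$ is affine and monotone on any interval lying to one side of $0$. For $1\le i\le k-2$, $h$ is increasing on $I_i$, so $h(I_i)=[h^{i+2}(0),h^{i+k+2}(0)]=I_{i+1}$; for $i=0$, $h$ is decreasing on $I_0$, so it reverses the endpoints and $h(I_0)=[h^2(0),h^{k+2}(0)]=I_1$. The one case that is not purely mechanical is $i=k-1$, where $I_{k-1}$ straddles $0$: writing $I_{k-1}=[h^k(0),0]\cup[0,h^{2k}(0)]$ and applying the increasing left branch and the decreasing right branch of $h$ (both of which take the value $h(0)=1$ at $z=0$), one gets $h([h^k(0),0])=[h^{k+1}(0),h(0)]=I_0$ and $h([0,h^{2k}(0)])=[h^{2k+1}(0),h(0)]\subseteq I_0$ (the inclusion because $h^{k+1}(0)<h^{2k+1}(0)<h(0)$ by \eqref{eq:ordering}), so $h(I_{k-1})=I_0$ and the cycle closes.

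I expect there to be no serious obstacle here --- the lemma genuinely does follow from \eqref{eq:ordering} once the two monotonicity directions of $h$ are fixed --- and that the only care needed is the bookkeeping of which endpoint of $I_i$ maps to which endpoint of its image, together with two small degeneracies: the straddling interval $I_{k-1}$, handled above, and the case $k=2$, where the chain $I_1,\dots,I_{k-2}$ of one-sided negative intervals is empty and one checks directly that $I_1$ (which for $k=2$ is the interval straddling $0$) and $I_0$ alone realise the $2$-cycle.
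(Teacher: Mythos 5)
Your proposal is correct and follows the same route as the paper, which simply states that the lemma ``follows immediately from \eqref{eq:ordering}'' and gives no further details; your write-up supplies exactly those details (signs of $a_L,a_R$, the consecutive non-overlapping placement of the endpoints in the chain, and the three monotonicity cases including the straddling interval $I_{k-1}$ and the degenerate case $k=2$). No gaps.
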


By Lemma \ref{le:Ii} orbits cycle through the intervals $I_i$,
one of which ($I_{k-1}$) contains the critical point $z=0$.
Thus the restriction of $h^k$ to any of these intervals
is a continuous piecewise-linear map with two pieces,
see Fig.~\ref{fig:zccInducedSchem}.
That is, $h^k$ is conjugate to an instance of the skew tent map family \eqref{eq:skewTentMap}.
Specifically, $h^k$ is conjugate to $h(z;\tilde{a}_L,\tilde{a}_R)$, where
$\tilde{a}_L = a_L^{k-2} a_R^2$ and $\tilde{a}_R = a_L^{k-1} a_R$,
because in each cycle orbits undergo either one or two iterations under the right piece of $h$,
and the remaining iterations under the left piece of $h$.

\begin{figure}[h!]
\begin{center}
\includegraphics[height=6cm]{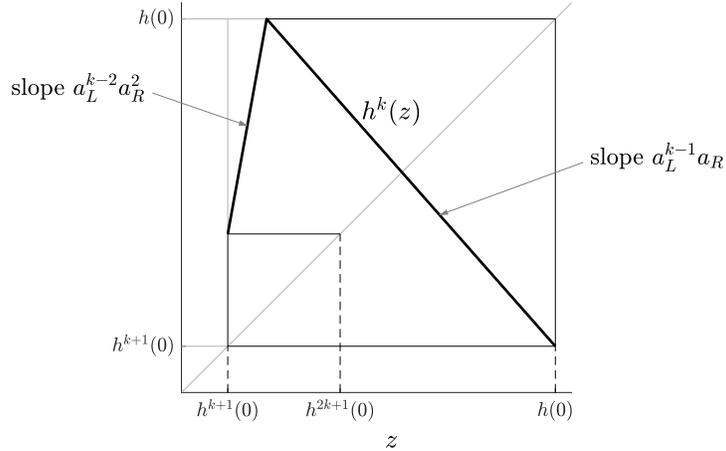}
\caption{
A sketch of the $k^{\rm th}$ iterate of \eqref{eq:skewTentMap} with $(a_L,a_R) \in S_k$
on the interval $I_0$.
\label{fig:zccInducedSchem}
} 
\end{center}
\end{figure}

If $(\tilde{a}_L,\tilde{a}_R) \in Q_1$ then $h(z;\tilde{a}_L,\tilde{a}_R)$ is transitive on $\tilde{I} = [h^2(0),h(0)]$.
Refer to \cite{VeGl90} (Lemmas 2.1 and 2.2) for a simple proof of this that assumes $\tilde{a}_L > 1$, as is the case here.
This implies $h(z;a_L,a_R)$ is transitive on $I_0 \cup \cdots \cup I_{k-1}$.
Since the value of $\tilde{a}_L$ is relatively large,
to be in $Q_1$ we just need $(\tilde{a}_L,\tilde{a}_R)$ to lie between
the two boundaries of $Q_1$ that are labelled in Fig.~\ref{fig:zccbifSet1dBCNF}.
This is because below $a_L a_R + a_L - a_R = 0$ the interval $\tilde{I}$ is not forward invariant,
while above $a_L a_R^2 + a_R - a_L = 0$ transitivity fails because
typical orbits cannot reach a neighbourhood of the fixed point in $\tilde{I}$.

By replacing $a_L$ and $a_R$ with $\tilde{a}_L$ and $\tilde{a}_R$
in the formulas for these boundaries
we produce \eqref{eq:left} and \eqref{eq:right}.
This explains the particular formulas \eqref{eq:left} and \eqref{eq:right}
and gives the following result.

\begin{lemma}
For any $(a_L,a_R) \in S_k$ ($k \ge 2$),
$h$ is transitive on $I_0 \cup \cdots \cup I_{k-1}$.
\label{le:transitive}
\end{lemma}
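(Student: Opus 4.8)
The plan is to reduce transitivity of $h$ on $I_0 \cup \cdots \cup I_{k-1}$ to transitivity of the induced map $h^k$ on the single interval $I_0$, which is already handled by the discussion preceding the statement. First I would record precisely what has been set up: by Lemma \ref{le:Ii} the intervals $I_0,\ldots,I_{k-1}$ are mutually disjoint and $h$ permutes them cyclically, $h(I_i) = I_{i+1 \bmod k}$; and the restriction of $h^k$ to $I_0$ is conjugate to the skew tent map $h(z;\tilde a_L,\tilde a_R)$ with $\tilde a_L = a_L^{k-2}a_R^2$ and $\tilde a_R = a_L^{k-1}a_R$. The text also explains that, for $(a_L,a_R)\in S_k$, the pair $(\tilde a_L,\tilde a_R)$ lies in $Q_1$ (this is exactly how the boundary curves \eqref{eq:left} and \eqref{eq:right} were derived), and that by the cited result from \cite{VeGl90} the map $h(z;\tilde a_L,\tilde a_R)$ is transitive on $\tilde I = [h^2(0),h(0)]$. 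Under the conjugacy this means $h^k$ is transitive on $I_0$; and by symmetry of the cyclic structure $h^k$ is transitive on each $I_i$.

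The core step is then a standard "transitivity lifts through a cyclic cover" argument. Let $J = I_0 \cup \cdots \cup I_{k-1}$ and let $V, W \subset J$ be nonempty relatively open sets. Pick indices $p,q$ with $V \cap I_p \neq \varnothing$ and $W \cap I_q \neq \varnothing$; replacing $V$ by $V \cap I_p$ and $W$ by $W\cap I_q$ we may assume $V \subset I_p$ and $W \subset I_q$. Since $h$ maps $I_p$ homeomorphically onto $I_{p+1 \bmod k}$ on each of its (at most two) linear branches — or more simply since $h(I_p)=I_{p+1}$ — the image $h^{\,q-p \bmod k}(V)$ is a nonempty relatively open subset of $I_q$. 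Now apply transitivity of $h^k$ on $I_q$ to this set and to $W$: there is $m \ge 0$ with $h^{km}\!\big(h^{\,q-p \bmod k}(V)\big) \cap W \neq \varnothing$, hence $h^{\,km + (q-p \bmod k)}(V) \cap W \neq \varnothing$. This shows some forward iterate of $V$ meets $W$, which is transitivity of $h$ on $J$.

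One small point to verify carefully is that $h^{\,q-p \bmod k}$ is an open map on $I_p$ into $I_q$, i.e. that it does not collapse an open set to a point; this is immediate because $h$ restricted to any $I_i$ is piecewise-linear with nonzero slopes (the slopes being among $a_L, a_R$, which are nonzero throughout $S_k$), so each branch is a homeomorphism onto its image and in particular maps relatively open sets to relatively open sets. I would also note explicitly that transitivity on $I_q$ for a single fixed $q$ suffices because one can always route the orbit through $I_q$; alternatively, transitivity of $h^k$ on $I_0$ together with $h(I_i)=I_{i+1\bmod k}$ immediately gives it on every $I_i$. The main obstacle is essentially bookkeeping rather than genuine difficulty: one must be slightly careful that the sets stay inside a single $I_i$ at each stage so that the cyclic permutation structure can be used, and that the conjugacy between $h^k|_{I_0}$ and $h(\cdot;\tilde a_L,\tilde a_R)$ genuinely transports topological transitivity (which it does, being a homeomorphism). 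No new estimates beyond those already established in Lemmas \ref{le:ordering}, \ref{le:Ii} and the $Q_1$ discussion are required.
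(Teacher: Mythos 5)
Your proposal is correct and follows essentially the same route as the paper: the lemma is justified there by the discussion immediately preceding it, namely that $h^k|_{I_0}$ is conjugate to the skew tent map with parameters $(\tilde a_L,\tilde a_R)=(a_L^{k-2}a_R^2,\,a_L^{k-1}a_R)$, that the boundaries \eqref{eq:left} and \eqref{eq:right} of $S_k$ are precisely the conditions placing $(\tilde a_L,\tilde a_R)$ in $Q_1$, and that the cited result of \cite{VeGl90} then gives transitivity of the induced map. Your only addition is to write out the standard lifting of transitivity from $h^k$ on one interval to $h$ on the cyclically permuted union, which the paper leaves implicit ("This implies...") and which you handle correctly.
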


\section{Constructing fattened intervals}
\label{sec:trappingRegion}

We now fatten the intervals $I_i$ \eqref{eq:Ii} to form a trapping region for
the attractor $I_0 \cup \cdots \cup I_{k-1}$ of the one-dimensional map $h$.

\begin{lemma}
For any $(a_L,a_R) \in S_k$ ($k \ge 2$),
there exists $\delta > 0$ such that the intervals
\begin{equation}
\begin{split}
J_0 &= \left[ h^{k+1}(0) - (k+1) \delta a_L^{k-1} |a_R|, h(0) + \delta \right], \\
J_i &= \left[ h^{i+1}(0) - (i+1) \delta a_L^{i-1} |a_R|, h^{i+k+1}(0) + (i+k+1) \delta a_L^{k+i-2} a_R^2 \right], ~\text{for all $i = 1,\ldots,k-1$},
\end{split}
\label{eq:Ji}
\end{equation}
are mutually disjoint and
\begin{equation}
h(J_i) \subset {\rm int} \left( J_{i+1 \modSmall k} \right), ~\text{for all $i = 0,1,\ldots,k-1$}.
\label{eq:Jimap}
\end{equation}
\label{le:Jimap}
\end{lemma}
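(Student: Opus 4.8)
The plan is to verify directly that the enlarged intervals $J_i$ given by \eqref{eq:Ji} satisfy the two claimed properties, using the ordering \eqref{eq:ordering} and the piecewise-linear structure of $h$. Fix $(a_L,a_R)\in S_k$. Write $J_i = [\ell_i, r_i]$ so that $\ell_i$ and $r_i$ are the endpoints of $I_i$ perturbed outward by amounts proportional to $\delta$; let me call these perturbation amounts $\alpha_i$ (the amount by which the left endpoint moves left) and $\beta_i$ (the amount by which the right endpoint moves right), as read off from \eqref{eq:Ji}. The key observation driving the choice of these coefficients is that $h$ is piecewise-linear with slopes $a_L$ (on $z\le 0$) and $a_R$ (on $z\ge 0$), so if an interval $[p,q]$ lies entirely in one branch then $h$ stretches it by exactly the factor $|a_L|$ or $|a_R|$; and the intervals $I_1,\dots,I_{k-1}$ each lie in the left branch (since by \eqref{eq:ordering} their endpoints are all negative), while $I_0$ straddles $0$. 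Tracking how the perturbation at the endpoint of $I_i$ propagates to the endpoint of $I_{i+1}$ under one application of $h$ is then a matter of multiplying by the appropriate slope; the coefficients $(i+1)\delta a_L^{i-1}|a_R|$ etc.\ in \eqref{eq:Ji} are precisely what makes this propagation consistent around the cycle while leaving a strict margin.

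First I would establish \eqref{eq:Jimap}. For each $i\in\{0,\dots,k-1\}$ I would compute $h(J_i)$ explicitly. For $i=1,\dots,k-2$ the interval $J_i$ is contained in $\{z<0\}$ (for $\delta$ small, by \eqref{eq:ordering} the endpoints stay negative), so $h(J_i) = [a_L r_i + 1,\, a_L \ell_i + 1]$ since $a_L<0$ reverses orientation; I would check that the image endpoints are, respectively, strictly inside $\ell_{i+1}$ and $r_{i+1}$ by comparing the $\delta$-coefficients — the factor gained is $|a_L|$, which matches the ratio of successive coefficients in \eqref{eq:Ji}, and the "$+1$" (from the orbit points $h^{j}(0)$ satisfying $h^{j+1}(0) = a_L h^j(0)+1$) makes the unperturbed parts line up exactly, so the strict inequality comes from the extra factor $(i+2)/(i+1)>1$ built into the coefficients. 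The two special cases are $i=k-1$, where $J_{k-1}$ contains $0$ so $h$ is applied piecewise and one must take the image of both subintervals and check it lands strictly inside $J_0$ (here the orientation and the split at $0$ require care, and the coefficient $(k+1)\delta a_L^{k-1}|a_R|$ on the left endpoint of $J_0$ is what absorbs the worst case); and $i=0$, where $J_0$ straddles $0$ as well, so again $h(J_0)$ is computed as a union over the two branches and compared with $J_1$. In each case the inclusion is strict because every coefficient carries a factor strictly larger than the minimal one needed, so one gets containment in the interior.

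Next I would establish mutual disjointness of the $J_i$. Since the unperturbed intervals $I_i$ are mutually disjoint and compact by Lemma \ref{le:Ii}, there is a positive minimum gap between consecutive ones in the real-line ordering given by \eqref{eq:ordering}; choosing $\delta$ small enough that every perturbation amount $\alpha_i,\beta_i$ is smaller than half this minimal gap makes the $J_i$ pairwise disjoint. Combining this with the first part, I would then take $\delta$ to be the minimum of the (finitely many) upper bounds required: small enough to keep $J_1,\dots,J_{k-2}$ inside $\{z<0\}$, small enough for each of the $k$ strict inclusions in \eqref{eq:Jimap} (each of which holds for all sufficiently small $\delta$ since it holds with margin at $\delta=0^+$ after cancelling the unperturbed parts), and small enough for disjointness. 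This finite intersection of open conditions on $\delta$ is nonempty, giving the required $\delta>0$.

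The main obstacle I anticipate is the bookkeeping for the two straddling intervals $J_0$ and $J_{k-1}$: because $h$ is not affine on them, $h(J_0)$ and $h(J_{k-1})$ are each a union of two subintervals, and one must verify that the relevant \emph{outer} endpoints of these unions (which come from whichever subinterval reaches furthest) land strictly inside $J_1$ and $J_0$ respectively. This is where the asymmetry between the left-endpoint coefficient and the right-endpoint coefficient of each $J_i$ in \eqref{eq:Ji} matters, and getting the slopes $a_L^{i-1}|a_R|$ versus $a_L^{k+i-2}a_R^2$ to match the actual stretching along the orbit — accounting for the fact that over a full period an orbit passes through the right branch either once or twice (as noted before Lemma \ref{le:transitive}) — is the delicate computational heart of the argument. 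Everything else reduces to the elementary fact that a strict inequality valid in the limit $\delta\to 0^+$ persists for small $\delta>0$.
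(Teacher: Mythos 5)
Your overall strategy is exactly the paper's: compute the images of the $J_i$ endpoint by endpoint using the affine branches, observe that the growth of the $\delta$-coefficients built into \eqref{eq:Ji} supplies a strict margin, and shrink $\delta$ to get disjointness. However, your description of the geometry in the regime $(a_L,a_R)\in S_k$ contains two concrete errors that would derail the computation if carried out as written. First, in $S_k$ one has $0<a_L<1$ and $a_R<0$ (see e.g.\ the inequality $a_L^{k-2}a_R^2>a_L^{k-1}|a_R|>1$ used in \S\ref{sec:expansion} and the sample points $(0.62,-3)$, $(0.47,-10)$), so the left branch $z\mapsto a_Lz+1$ is orientation-\emph{preserving}; your claim that ``$a_L<0$ reverses orientation'' swaps which endpoint maps to which and breaks the matching of the $|a_R|$-coefficients on the left ends with the $a_R^2$-coefficients on the right ends. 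Second, and more seriously, you have the straddling interval wrong: by \eqref{eq:ordering}, $0<h^{2k}(0)<h^{k+1}(0)<h(0)$, so $I_0=[h^{k+1}(0),h(0)]$ lies entirely in $\{z>0\}$ and $h$ acts on $J_0$ as the single affine piece $a_Rz+1$; the unique interval containing the critical point is $I_{k-1}=[h^k(0),h^{2k}(0)]$. (Your opening paragraph even asserts the opposite of your later paragraph, claiming all of $I_1,\dots,I_{k-1}$ have negative endpoints.) If $J_0$ really straddled $0$, then $h(J_0)$ would contain $h(0)=1$, which lies nowhere near $J_1\approx[h^2(0),h^{k+2}(0)]\subset(-\infty,0)$, and the inclusion $h(J_0)\subset{\rm int}(J_1)$ you are trying to prove would be false; so this is not a harmless bookkeeping slip.

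With the branch assignments corrected, your plan goes through and coincides with the paper's proof: $J_0$ maps under $a_Rz+1$ (orientation-reversing, one piece) into ${\rm int}(J_1)$; each $J_i$ for $i=1,\dots,k-2$ maps under $a_Lz+1$ (orientation-preserving, one piece) into ${\rm int}(J_{i+1})$ because the coefficient ratio $(i+2)/(i+1)>1$; and only $i=k-1$ needs the piecewise treatment, where for small $\delta$ the image is $\left[h^{k+1}(0)-k\delta a_L^{k-1}|a_R|,\,h(0)\right]$, which sits strictly inside $J_0$ precisely because $J_0$'s left coefficient is $(k+1)$ rather than $k$ and its right end is fattened by $+\delta$ beyond $h(0)$.
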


\begin{proof}
By Lemma \ref{le:Ii} we can take $\delta > 0$ sufficiently small that the intervals $J_i$ are mutually disjoint.
The interval $J_0$ maps under $a_R z + 1$, where $a_R < 0$, thus
\begin{equation}
h(J_0) = \left[ h^2(0) - |a_R| \delta, h^{k+2}(0) + (k+1) \delta a_L^{k-1} |a_R|^2 \right].
\nonumber
\end{equation}
By comparing this to
\begin{equation}
J_1 = \left[ h^2(0) - 2 |a_R| \delta, h^{k+2}(0) + (k+2) \delta a_L^{k-1} |a_R|^2 \right],
\nonumber
\end{equation}
we verify \eqref{eq:Jimap} for $i=0$.
Equation \eqref{eq:Jimap} can similarly be verified for each $i=1,\ldots,k-2$.

For $i = k-1$, observe $0 \in {\rm int}(J_{k-1})$, by \eqref{eq:ordering}.
Thus the image under $h$ of the left endpoint of $J_{k-1}$ is $h^{k+1}(0) - k \delta a_L^{k-1} |a_R|$,
while the image under $h$ of the right endpoint is $h^{2k+1}(0) + 2 k \delta a_L^{2 k - 3} |a_R|^3$.
Since $h^{k+1} < h^{2k+1}$, by \eqref{eq:ordering},
we can choose $\delta > 0$ small enough that the image of the left endpoint is smaller than the image of the right endpoint.
In this case
\begin{equation}
h(J_{k-1}) = \left[ h^{k+1}(0) - k \delta a_L^{k-1} |a_R|, h(0) \right],
\nonumber
\end{equation}
and by comparing this to the definition of $J_0$ we see that \eqref{eq:Jimap} is verified for $i = k-1$.
\end{proof}

To motivate the next construction, recall
from Lemma \ref{le:directProduct} that $g^n$ is conjugate (via a translation)
to a direct product of $n$ copies of $h$.
So by Lemma \ref{le:Jimap} to obtain trapping regions for $g^n$ we can take
unions of Cartesian products of the $J_i$, some shifted by $-1$ to account for the translation in \eqref{eq:directProduct}.
But to obtain trapping regions for $g$ we have to work a bit harder.

Write
\begin{align*}
J_i &= [p_i,q_i], \\
h \left( J_{i-1 \modSmall k} \right) &= [r_i,s_i],
\end{align*}
for all $i = 0,1,\ldots,k-1$.
Observe $p_i < r_i < s_i < q_i$ for all $i$ by \eqref{eq:Jimap}.
Now let
\begin{align}
t_{i,j} &= p_i + \frac{(r_i - p_i)(j-1)}{n}, &
u_{i,j} &= q_i + \frac{(s_i - q_i)(j-1)}{n},
\label{eq:tu}
\end{align}
and
\begin{equation}
K_{i,j} = \left[ t_{i,j} - 1, u_{i,j} - 1 \right],
\label{eq:K}
\end{equation}
for all $i = 0,1,\ldots,k-1$ and $j = 2,3,\ldots,n$.
The next result uses the following notation:
given $Z \subset \mathbb{R}$ and $a \in \mathbb{R}$,
we write $Z + a$ to abbreviate $\{ z+a \,|\, z \in Z \}$.
Equation \eqref{eq:JKmap} is an immediate consequence of the ordering 
illustrated in Fig.~\ref{fig:zccIntervals}.

\begin{figure}[h!]
\begin{center}
\includegraphics[height=1cm]{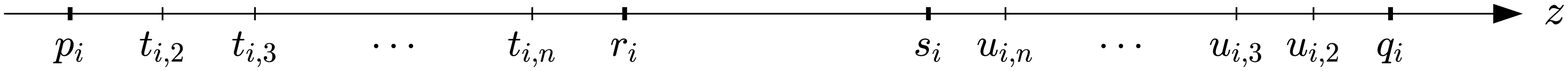}
\caption{
A sketch illustrating the ordering of scalar quantities introduced in the text.
These are defined from the fattened intervals $J_i$
and used to construct additional intervals $K_{i,j}$
that we then use to form boxes via \eqref{eq:Phi}.
Note, each $I_i$ is contained in $[r_i,s_i]$.
\label{fig:zccIntervals}
} 
\end{center}
\end{figure}

\begin{lemma}
Let $(a_L,a_R) \in S_k$ ($k \ge 2$) and $\delta > 0$ be as in Lemma \ref{le:Jimap}.
Then
\begin{equation}
\begin{split}
h \left( J_{i-1 \modSmall k} \right) - 1 &\subset {\rm int} \left( K_{i,n} \right), \\
K_{i,j} &\subset {\rm int} \left( K_{i,j-1} \right), ~\text{for all $j = 3,4,\ldots,n$}, \\
K_{i,2} + 1 &\subset {\rm int} \left( J_i \right),
\end{split}
\label{eq:JKmap}
\end{equation}
for all $i = 0,1,\ldots,k-1$.
\label{le:JKmap}
\end{lemma}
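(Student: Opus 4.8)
The plan is to reduce everything to a single chain of strict inequalities among the scalars $p_i, t_{i,j}, r_i, s_i, u_{i,j}, q_i$, from which all three inclusions in \eqref{eq:JKmap} can be read off directly by comparing endpoints after translating by $\pm 1$; this is precisely the ordering sketched in Fig.~\ref{fig:zccIntervals}, so the proof amounts to making that picture rigorous. The only input beyond the definitions \eqref{eq:tu} and \eqref{eq:K} is that $p_i < r_i < s_i < q_i$, which is \eqref{eq:Jimap} from Lemma \ref{le:Jimap}; in particular $r_i - p_i > 0$ and $s_i - q_i < 0$.

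First I would observe that, for fixed $i$, the formula \eqref{eq:tu} makes $j \mapsto t_{i,j}$ affine with slope $\frac{r_i - p_i}{n} > 0$ and $j \mapsto u_{i,j}$ affine with slope $\frac{s_i - q_i}{n} < 0$, with boundary values $t_{i,1} = p_i$, $t_{i,n+1} = r_i$, $u_{i,1} = q_i$, $u_{i,n+1} = s_i$. Hence
\begin{equation}
p_i = t_{i,1} < t_{i,2} < \cdots < t_{i,n} < r_i, \qquad q_i = u_{i,1} > u_{i,2} > \cdots > u_{i,n} > s_i, \nonumber
\end{equation}
and, combining these with $r_i < s_i$, the full ordering
\begin{equation}
p_i < t_{i,2} < \cdots < t_{i,n} < r_i < s_i < u_{i,n} < \cdots < u_{i,2} < q_i. \nonumber
\end{equation}

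With this chain in hand, each line of \eqref{eq:JKmap} is immediate. Since $h(J_{i-1 \modSmall k}) = [r_i,s_i]$ and $K_{i,n} = [t_{i,n}-1,u_{i,n}-1]$, the inequalities $t_{i,n} < r_i$ and $s_i < u_{i,n}$ give $h(J_{i-1 \modSmall k}) - 1 \subset {\rm int}(K_{i,n})$; for $3 \le j \le n$ the inequalities $t_{i,j-1} < t_{i,j}$ and $u_{i,j} < u_{i,j-1}$ give $K_{i,j} \subset {\rm int}(K_{i,j-1})$; and $p_i < t_{i,2}$, $u_{i,2} < q_i$, together with $J_i = [p_i,q_i]$, give $K_{i,2} + 1 \subset {\rm int}(J_i)$. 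The argument is uniform in $i$, so it covers all $i = 0,1,\ldots,k-1$.

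I do not expect any genuine obstacle here: the substantive work is already done in the earlier construction of the fattened intervals $J_i$ and the choice of $\delta$ in Lemma \ref{le:Jimap}. The one place to stay careful is the sign bookkeeping — $u_{i,j}$ decreases in $j$ only because $s_i - q_i < 0$ — which is exactly why the interpolation in \eqref{eq:tu} is arranged so that $t_{i,j}$ moves from $p_i$ toward $r_i$ and $u_{i,j}$ moves from $q_i$ toward $s_i$ as $j$ grows.
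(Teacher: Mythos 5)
Your proof is correct and follows the same route as the paper, which simply declares \eqref{eq:JKmap} an immediate consequence of the ordering $p_i < t_{i,2} < \cdots < t_{i,n} < r_i < s_i < u_{i,n} < \cdots < u_{i,2} < q_i$ depicted in Fig.~\ref{fig:zccIntervals}; you have just made that ordering explicit from the affine interpolation \eqref{eq:tu} and the fact that $p_i < r_i < s_i < q_i$ from \eqref{eq:Jimap}. No gaps.
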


\section{Boxes}
\label{sec:symbolics}

We write $\mathbb{Z}_k$ for the set $\{ 0,1,\ldots,k-1 \}$ with addition taken modulo $k$.
Given a vector $v \in \mathbb{Z}_k^n$ (so $v_j \in \mathbb{Z}_k$ for each $j = 1,2,\ldots,n$), we let
\begin{equation}
\Phi_v = J_{v_1} \times K_{v_2,2} \times K_{v_3,3} \times \cdots \times K_{v_n,n} \,,
\label{eq:Phi}
\end{equation}
be a box in $\mathbb{R}^n$.
By Lemma \ref{le:JKmap} and the definition of $g$, each such box
maps under $g$ to the interior of another such box.
Specifically $\Phi_v$ maps to the interior of $\Phi_{\psi(v)}$, where the
map $\psi : \mathbb{Z}_k^n \to \mathbb{Z}_k^n$ is defined by
\begin{equation}
\psi(v) = \left( v_2, v_3, \ldots, v_n, v_1 + 1 \right).
\label{eq:psi}
\end{equation}
Formally we have the following result.

\begin{lemma}
Let $(a_L,a_R) \in S_k$ ($k \ge 2$) and $\delta > 0$ be as in Lemma \ref{le:Jimap}.
Then
\begin{equation}
g(\Phi_v) \subset {\rm int} \left( \Phi_{\psi(v)} \right),
\label{eq:boxImage}
\end{equation}
for all $v \in \mathbb{Z}_k^n$.
\label{le:boxImage}
\end{lemma}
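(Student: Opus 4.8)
The plan is to verify the box containment \eqref{eq:boxImage} componentwise, using the explicit definition of $g$ in \eqref{eq:simpleForm} together with the three containments already established in Lemma \ref{le:JKmap}. Fix $v \in \mathbb{Z}_k^n$ and write out what $g$ does to a point of $\Phi_v$: because $g$ has the companion-matrix structure \eqref{eq:ALAR}, its first $n-1$ output coordinates are simply the shift $x \mapsto (x_2,\ldots,x_n)$ of the input (the $+\sigma e_1 = +e_1$ only affects the first coordinate but, crucially, the first coordinate of the image involves $x_1$ alone through the last row $a_{L/R} x_1$; I will restate this carefully). Concretely, if $x \in \Phi_v$ then $x_1 \in J_{v_1}$, $x_2 \in K_{v_2,2}$, \ldots, $x_n \in K_{v_n,n}$, and $g(x) = (g(x)_1, x_2, x_3, \ldots, x_n)$ where $g(x)_1 = a_L x_1 + 1$ if $x_1 \le 0$ and $a_R x_1 + 1$ if $x_1 \ge 0$ — that is, $g(x)_1 = h(x_1) = h(x_1;a_L,a_R)$.

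Next I would match each output coordinate against the corresponding factor of $\Phi_{\psi(v)}$. By \eqref{eq:psi} we have $\Phi_{\psi(v)} = J_{v_2} \times K_{v_3,2} \times \cdots \times K_{v_n,n-1} \times K_{v_1+1,n}$. For the first coordinate: $g(x)_1 = h(x_1) \in h(J_{v_1})$, and since the previous index in the cyclic labelling is such that $J_{v_1} = J_{(v_1+1)-1 \bmod k}$, the first line of \eqref{eq:JKmap} (applied with $i = v_1+1 \bmod k$) gives $h(J_{v_1}) - 1 \subset {\rm int}(K_{v_1+1,n})$ — wait, here I need to be careful about the translation by $-1$ built into $K_{i,j}$ versus its absence in $g$. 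I will reconcile this: the factor $J_{v_1}$ sits at "true" coordinates, whereas the $K_{i,j}$ are defined in \eqref{eq:K} as already shifted down by $1$; so $h(J_{v_1})$ (no shift, as produced by $g$) lands inside ${\rm int}(K_{v_1+1,n})$ precisely by the first line of \eqref{eq:JKmap}, which states $h(J_{(i-1) \bmod k}) - 1 \subset {\rm int}(K_{i,n})$ — so in fact I want to check whether $g$'s last-row action on $J_{v_1}$ should be compared with $K_{v_1+1,n}$ directly or after a shift. I would resolve this bookkeeping by spelling out once and for all that $h(J_{i-1})$ is an interval of "$J$-type" coordinates and $K_{i,n} = [t_{i,n}-1, u_{i,n}-1]$ with $t_{i,n},u_{i,n}$ interpolating between $J$-endpoints and $h(J_{i-1})$-endpoints, so that the first line of \eqref{eq:JKmap} says exactly $h(J_{i-1}) - 1 \subset {\rm int}(K_{i,n})$; since $g$ produces $h(x_1)$ with no shift, I must instead observe that the \emph{boxes} $\Phi_v$ are what carry the shift: the factor $K_{v_j,j}$ of $\Phi_v$ is already shifted, and the first factor $J_{v_1}$ is not. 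Hence the three lines of \eqref{eq:JKmap} are precisely calibrated so that $J \to K_{\cdot,n} \to K_{\cdot,n-1} \to \cdots \to K_{\cdot,2} \to J$ under the composite of $h$ (or the identity shift) and the appropriate $\mp 1$ translations, and the chain closes up. I will present this as: (a) first coordinate, $h(J_{v_1}) \subset {\rm int}(K_{v_1+1,n})$ after accounting for the shift, using line 1 of \eqref{eq:JKmap}; (b) coordinates $2$ through $n-1$, where $g$ acts as the identity carrying $K_{v_j,j}$ to the slot that must contain $K_{v_{j},j-1}$, so I need $K_{v_j,j} \subset {\rm int}(K_{v_j,j-1})$, which is line 2 of \eqref{eq:JKmap}; (c) the last coordinate, where $g$ carries $x_n \in K_{v_n,n}$ identically to the slot requiring membership in $J_{v_2}$...

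Actually, re-examining \eqref{eq:psi}: $\psi(v) = (v_2,\ldots,v_n,v_1+1)$, so the target box factors are: slot $1$ wants $J_{v_2}$; slot $2$ wants $K_{v_3,2}$; \ldots; slot $n-1$ wants $K_{v_n,n-1}$; slot $n$ wants $K_{v_1+1,n}$. And $g(\Phi_v)$ has: slot $1$ contains $h(J_{v_1})$; slot $2$ contains $K_{v_2,2}$; slot $3$ contains $K_{v_3,3}$; \ldots; slot $n$ contains $K_{v_n,n}$. So the needed containments are: slot $n$: $K_{v_n,n} \subset {\rm int}(K_{v_1+1,n})$?? — that cannot be right unless $v_n = v_1+1$. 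Let me recount: $g$ sends $(x_1,\ldots,x_n) \mapsto (h(x_1), x_1 \text{-free shift})$. With companion matrix \eqref{eq:ALAR}, row $j$ for $j<n$ reads off $x_{j+1}$, and row $n$ reads off $a x_1$. So $g(x) = (x_2, x_3, \ldots, x_n, h(x_1)-1)$ — the $h(x_1)-1$ comes from $a x_1$, matching $g(x)_n = h(x_1)-1$ noted in the proof of Lemma \ref{le:directProduct}, and the $+e_1$ adds $1$ to the \emph{first} coordinate giving $x_2 + \mu$-analogue... no, here $\sigma e_1$ with $\sigma=1$ adds $1$ to slot $1$: $g(x)_1 = x_2 + 1$. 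Hmm, but then slot $1$ of $g(\Phi_v)$ contains $K_{v_2,2}+1$, which must lie in ${\rm int}(J_{v_2})$ — that is exactly line 3 of \eqref{eq:JKmap}! And slots $2,\ldots,n-1$: $g(x)_j = x_{j+1}$, so slot $j$ contains $K_{v_{j+1},j+1}$, which must lie in ${\rm int}(K_{v_{j+1},j})$ — line 2 of \eqref{eq:JKmap}. And slot $n$: $g(x)_n = h(x_1)-1 \in h(J_{v_1}) - 1$, which must lie in ${\rm int}(K_{v_1+1,n})$ — line 1 of \eqref{eq:JKmap} with $i = v_1+1$. So all three lines of \eqref{eq:JKmap} get used exactly once, and \eqref{eq:boxImage} follows since a product of interiors is the interior of the product. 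I would therefore organize the proof as: recall $g(x) = (x_2+1, x_3, \ldots, x_n, h(x_1)-1)$ from the structure of \eqref{eq:ALAR}; note $\Phi_{\psi(v)}$ has the factors listed above; then invoke the three inclusions of \eqref{eq:JKmap} coordinate by coordinate; conclude.

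The main obstacle — and really the only non-routine point — is the index bookkeeping: making sure the cyclic shift in $\psi$, the coordinate shift in $g$ coming from the companion structure \eqref{eq:ALAR}, and the $\pm 1$ translations baked into the definitions \eqref{eq:K} of $K_{i,j}$ all line up so that each of the three lines of \eqref{eq:JKmap} is the exact inclusion needed for one block of coordinates. Once the statement $g(x) = (x_2+1, x_3, \ldots, x_n, h(x_1)-1)$ is written down explicitly, everything else is a direct substitution with no estimates required, so the proof is short.
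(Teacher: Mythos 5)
Your final argument is correct and is exactly the proof the paper intends (the paper states the lemma follows directly from Lemma \ref{le:JKmap} and the definition of $g$, without writing out the details): $g(x) = (x_2+1,\, x_3,\, \ldots,\, x_n,\, h(x_1)-1)$, and the three lines of \eqref{eq:JKmap} give precisely the coordinate-wise containments for slots $1$, $2$ through $n-1$, and $n$ of $\Phi_{\psi(v)}$ respectively. The visible false starts in your writeup should be edited out, but the mathematics you settle on is right.
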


We now use Lemma \ref{le:perturb} to extend this result to maps of the form \eqref{eq:ndbcnfWithHOTs}.
Here we use the following notation:
given $\Omega \subset \mathbb{R}^n$,
we write $\mu \Omega$ to abbreviate $\{ \mu y \,|\, y \in \Omega \}$.

\begin{lemma}
Let $(a_L,a_R) \in S_k$ ($k \ge 2$) and $\delta > 0$ be as in Lemma \ref{le:Jimap}.
There exists a neighbourhood $U_3 \subset \mathbb{R}^n \times \mathbb{R}^n$ of $(d^L,d^R)$ such that
for any map $f$ of the form \eqref{eq:ndbcnfWithHOTs} with $(c^L,c^R) \in U_3$
there exists $\mu_3 > 0$ such that
\begin{equation}
f(\mu \Phi_v;\mu) \subset {\rm int} \left( \mu \Phi_{\psi(v)} \right),
\label{eq:unionPhiMap}
\end{equation}
for all $\mu \in (0,\mu_3)$ and $v \in \mathbb{Z}_k^n$.
\label{le:boxImagePerturbed}
\end{lemma}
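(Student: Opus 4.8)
The plan is to combine the unperturbed box-mapping property (Lemma~\ref{le:boxImage}) with the perturbation estimate (Lemma~\ref{le:perturb}), using compactness to upgrade the strict inclusion in \eqref{eq:boxImage} to a quantitative margin. First I would set $\Omega = \bigcup_{v \in \mathbb{Z}_k^n} \Phi_v$. Since $\mathbb{Z}_k^n$ is finite and each $\Phi_v$ is a compact box, $\Omega$ is compact, hence bounded, so it is an admissible choice in Lemma~\ref{le:perturb}. Next, for each $v \in \mathbb{Z}_k^n$ the image $g(\Phi_v;a_L,a_R,1)$ is compact and, by Lemma~\ref{le:boxImage}, contained in the open set ${\rm int}(\Phi_{\psi(v)})$; therefore there is $\eta_v > 0$ such that every point within distance $\eta_v$ of $g(\Phi_v;a_L,a_R,1)$ still lies in ${\rm int}(\Phi_{\psi(v)})$. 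As there are finitely many $v$, I take $\eta = \min_v \eta_v > 0$.

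Now apply Lemma~\ref{le:perturb} with this $\eta$, the set $\Omega$ above, and the given $a_L,a_R$. This yields a neighbourhood $U_3 := U_1$ of $(d^L,d^R)$ and, for each $f$ of the form \eqref{eq:ndbcnfWithHOTs} with $(c^L,c^R) \in U_3$, a threshold $\mu_3 := \mu_1 > 0$ such that $\| \tilde{f}(y;\mu) - g(y;a_L,a_R,1) \| < \eta$ for all $y \in \Omega$ and all $\mu \in (0,\mu_3)$ (here ${\rm sgn}(\mu)=1$ since $\mu>0$, matching the standing convention $\sigma=1$). Fix such an $f$, a $\mu \in (0,\mu_3)$, and a $v \in \mathbb{Z}_k^n$. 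For any $y \in \Phi_v$, the point $g(y;a_L,a_R,1)$ lies in $g(\Phi_v;a_L,a_R,1)$ and is within $\eta$ of $\tilde{f}(y;\mu)$, so $\tilde{f}(y;\mu) \in {\rm int}(\Phi_{\psi(v)})$ by the choice of $\eta$; hence $\tilde{f}(\Phi_v;\mu) \subset {\rm int}(\Phi_{\psi(v)})$. Finally I translate back through the scaling \eqref{eq:scaledMapDefn}: for $\mu>0$ one has $f(\mu y;\mu) = \mu\,\tilde{f}(y;\mu)$, and multiplication by the positive scalar $\mu$ commutes with taking interiors (and with Cartesian products of intervals), so $f(\mu \Phi_v;\mu) = \mu\,\tilde{f}(\Phi_v;\mu) \subset \mu\,{\rm int}(\Phi_{\psi(v)}) = {\rm int}(\mu \Phi_{\psi(v)})$, which is exactly \eqref{eq:unionPhiMap}.

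I do not expect a serious obstacle: the argument is essentially ``an open inclusion between compact and open sets persists under $C^0$-small perturbations,'' transported through the linear rescaling $y = x/\mu$ that relates $f$ to $\tilde{f}$. The one point that needs a little care is that some boxes $\Phi_v$ straddle the switching manifold $\{y_1 = 0\}$ (indeed $0 \in {\rm int}(J_{k-1})$, so $\Phi_v$ crosses it whenever $v_1 = k-1$), which rules out a naive piece-by-piece comparison; but Lemma~\ref{le:perturb}(i) bounds $\|\tilde{f}(\cdot;\mu) - g\|$ uniformly over all of $\Omega$, including on $\{y_1=0\}$ where both maps are continuous, so this causes no difficulty and only the zeroth-order estimate of Lemma~\ref{le:perturb} is needed here (the derivative estimates (ii)--(iii) will be used later, for the expansion argument).
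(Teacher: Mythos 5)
Your proof is correct and follows essentially the same route as the paper: take $\Omega=\bigcup_v \Phi_v$, use compactness of the finitely many boxes to extract a uniform margin $\eta>0$ from the strict inclusion in Lemma~\ref{le:boxImage}, and then invoke Lemma~\ref{le:perturb} with this $\eta$ to set $U_3=U_1$ and $\mu_3=\mu_1$. The only difference is that you spell out the rescaling $f(\mu y;\mu)=\mu\tilde f(y;\mu)$ and the switching-manifold remark explicitly, which the paper leaves implicit.
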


\begin{proof}
By \eqref{eq:boxImage} there exists $\eta > 0$ such that, for all $v \in \mathbb{Z}_k^n$,
all points within a distance $\eta$ of $g(\Phi_v)$ lie inside ${\rm int} \left( \Phi_{\psi(v)} \right)$.
With $\Omega = \bigcup_{v \in \mathbb{Z}_k^n} \Phi_v$, let $U_3 = U_1$ and $\mu_3 = \mu_1$ be as in Lemma \ref{le:perturb}.
Then for any map of the form \eqref{eq:ndbcnfWithHOTs} with $(c^L,c^R) \in U_3$,
\eqref{eq:unionPhiMap} is satisfied for all $v \in \mathbb{Z}_k^n$.
\end{proof}

\section{Counting the number of trapping regions}
\label{sec:number}

The {\em orbit} of $v \in \mathbb{Z}_k^n$ under $\psi$ is the set
\begin{equation}
{\rm orb}(v) = \left\{ \psi^i(v) \,\middle|\, i \ge 0 \right\}.
\label{eq:orb}
\end{equation}
Given $v \in \mathbb{Z}_k^n$ let
\begin{equation}
T_v = \bigcup_{w \in {\rm orb}(v)} \Phi_w \,.
\label{eq:T}
\end{equation}
Then the scaled set $\mu T_v$ is a trapping region for any map $f$
that satisfies the conditions of Lemma \ref{le:boxImagePerturbed}.
The number of mutually disjoint trapping regions
given by this construction is equal to the number of orbits of $\psi$.
The purpose of this section is to prove the following result.

\begin{proposition}
For any $k, n \ge 1$ the number of orbits of $\psi$ is given by \eqref{eq:Nformula}.
\label{pr:numberOrbits}
\end{proposition}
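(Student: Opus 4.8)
The plan is to apply Burnside's lemma to the action of the cyclic group generated by $\psi$ on the finite set $\mathbb{Z}_k^n$. First I would observe that $\psi$ is a bijection of $\mathbb{Z}_k^n$: its inverse sends $(v_1,\ldots,v_n)$ to $(v_n-1,v_1,v_2,\ldots,v_{n-1})$. Hence $\psi$ generates a finite cyclic group $G = \langle \psi \rangle$, and the orbits of $\psi$ in the sense of \eqref{eq:orb} are exactly the orbits of this group action. By Burnside's lemma the number of orbits equals $\frac{1}{|G|} \sum_{\gamma \in G} |{\rm Fix}(\gamma)|$, so the computation reduces to two subtasks: determining the order $|G|$, i.e.\ the smallest $m \ge 1$ with $\psi^m = \id$, and counting the fixed points of each power $\psi^j$.

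For the order, I would compute $\psi^n$ explicitly. Each application of $\psi$ cyclically shifts the coordinates and adds $1$ to the entry that moves into the last slot, so after $n$ steps every coordinate has returned to its original position but each has been incremented once: $\psi^n(v) = v + (1,1,\ldots,1)$. Therefore $\psi^{jn}(v) = v + (j,j,\ldots,j)$, which is the identity iff $k \mid j$. Combined with the fact that $\psi^i$ for $0 < i < n$ genuinely permutes coordinate positions (so cannot be the identity unless it also happens to fix everything, which a short check rules out), this gives $|G| = kn$, matching the prefactor in \eqref{eq:Nformula}. The slightly delicate point here is confirming that no $\psi^i$ with $i$ not a multiple of $n$ is the identity; this follows because such a $\psi^i$ permutes the $n$ coordinate slots nontrivially, so it moves, e.g., any $v$ with distinct entries.

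The main work — and the main obstacle — is counting $|{\rm Fix}(\psi^j)|$ for $j = 1,\ldots,kn$. Writing $j = qn + i$ with $0 \le i < n$, a vector $v$ is fixed by $\psi^j$ iff cyclically shifting $v$ by $i$ positions and adding a suitable constant vector returns $v$; this forces $v$ to be periodic (as a bi-infinite sequence obtained by repeating) with period $g = \gcd(i,n)$ in the positions, with an arithmetic-progression-type constraint on the added constants that is solvable modulo $k$ precisely when a certain divisibility holds. Carefully bookkeeping the shift-plus-increment structure, one finds $|{\rm Fix}(\psi^j)|$ is either $k^{\gcd(i,n)}$ or $0$ depending on a congruence condition tying the number of increments accumulated per period to $k$. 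Substituting these counts into Burnside's sum, reindexing the divisors of $n$ and collecting the surviving terms, and using the standard identity $\sum_{d \mid m} \varphi(d) = m$ to regroup, the sum collapses to $\frac{1}{kn}\sum_{d \mid a} \varphi(d) k^{n/d}$ where $a$ is the largest divisor of $n$ coprime to $k$ — exactly \eqref{eq:Nformula}. The bookkeeping of which powers $\psi^j$ contribute, and extracting the coprime-to-$k$ condition that defines $a$, is where the real care is needed; everything else is routine group theory and the totient identity.
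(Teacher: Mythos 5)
Your proposal follows essentially the same route as the paper's proof: Burnside's lemma applied to the cyclic group $\langle \psi \rangle$ of order $kn$ (via $\psi^n(v) = v + (1,1,\ldots,1)$), fixed-point counts of each power equal to $k^{\gcd(\cdot\,,n)}$ or $0$ according to a congruence condition, and a final regrouping by divisors using the totient function. The one step you defer as ``careful bookkeeping'' --- showing that the exponents with nonzero fixed-point count are exactly $\psi^{jk}$ with $j$ a multiple of $n/a$, which is where the quantity $a$ actually emerges --- is precisely the content of the paper's Lemma \ref{le:multiples} and is the only non-routine part of the argument, so it should be carried out explicitly in a final write-up.
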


\begin{figure}[h!]
\begin{center}
\includegraphics[height=9cm]{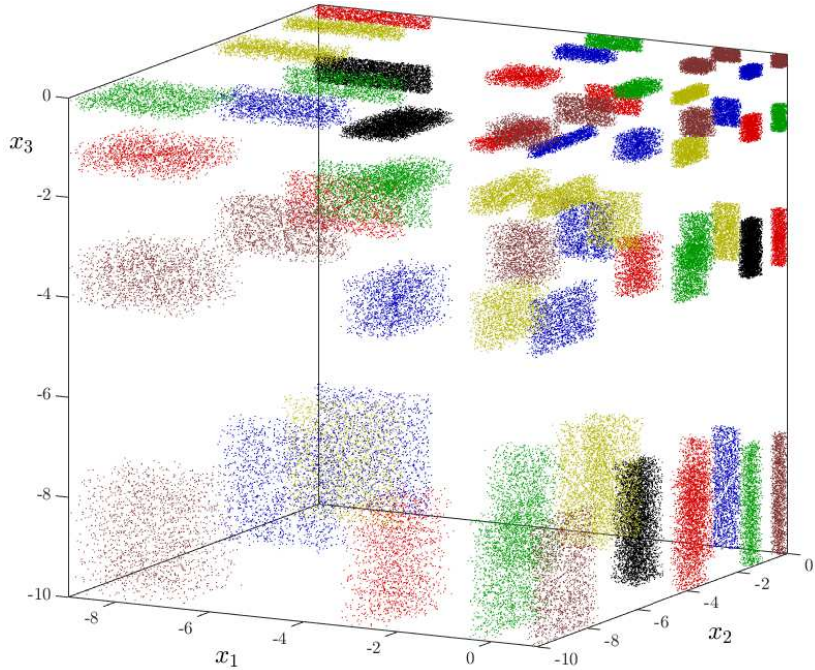}
\caption{
A phase portrait of \eqref{eq:simpleForm} with $n=3$ and $(a_L,a_R) = (0.47,-10) \in S_4$.
We show $20000$ points of six different orbits.
Each orbit has a different colour.
\label{fig:zccqq3d}
} 
\end{center}
\end{figure}

Fig.~\ref{fig:zccqq3d} shows an example with $k=4$ and $n=3$.
This figure is for the simple form \eqref{eq:simpleForm} in three dimensions with $(a_L,a_R) = (0.47,-10) \in S_4$.
By Proposition \ref{pr:numberOrbits} the number of trapping regions \eqref{eq:T} is $N[4,3] = 6$.
Numerically we observe each trapping region has a single three-dimensional attractor.
Five of the trapping regions are comprised of $12$ boxes.
The sixth (black in Fig.~\ref{fig:zccqq3d}) is comprised of four boxes
and corresponds to the orbit
\begin{equation}
\begin{split}
v &= (2,1,0), \\
\psi(v) &= (1,0,3), \\
\psi^2(v) &= (0,3,2), \\
\psi^3(v) &= (3,2,1), \\
\psi^4(v) &= v.
\end{split}
\nonumber
\end{equation}

To prove Proposition \ref{pr:numberOrbits} we first establish three lemmas.

\begin{lemma}
If $i$ is not a multiple of $k$ then $\psi^i$ has no fixed points.
\label{le:periodIsMultipleOfk}
\end{lemma}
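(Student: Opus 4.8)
The plan is to analyze the action of $\psi$ on an arbitrary $v \in \mathbb{Z}_k^n$ componentwise and track how the "shift-and-increment" structure of \eqref{eq:psi} propagates through iterates. First I would compute a closed form for $\psi^i(v)$. Writing the indices of $v$ cyclically, one iteration of $\psi$ moves component $v_{j+1}$ into slot $j$ for $j < n$ and places $v_1 + 1$ into slot $n$. Iterating, the entry that lands in a given slot after $i$ steps is some original entry $v_\ell$ shifted by a nonnegative integer that counts how many times it has "wrapped around" the $n$ slots; concretely, after $i$ applications the entry in slot $j$ equals $v_{((i+j-1) \bmod n) + 1} + \lfloor (i + j - 1)/n \rfloor$, with all arithmetic on the value taken mod $k$. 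The point is that every component of $\psi^i(v)$ picks up an additive term of the form $\lceil i/n \rceil$ or $\lfloor i/n \rfloor$.

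Next I would suppose $\psi^i(v) = v$ for some $i$ that is \emph{not} a multiple of $k$ and derive a contradiction. Summing all $n$ components of the identity $\psi^i(v) = v$ in $\mathbb{Z}_k$: the multiset of "value parts" on the left is a permutation of $\{v_1, \ldots, v_n\}$ (the shift just rearranges which original entry sits where), so the sum of the value parts equals $\sum_j v_j$; meanwhile the total of the added integer shifts, summed over the $n$ slots, is exactly $i$ (each of the $i$ increments is counted once as it eventually reaches slot $n$ and then gets carried forward — more cleanly, $\sum_{j=1}^{n} \lfloor (i+j-1)/n \rfloor = i$, which is the standard identity $\sum_{j=0}^{n-1} \lfloor (i+j)/n \rfloor = i$). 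Hence reducing mod $k$ gives $\sum_j v_j \equiv \sum_j v_j + i \pmod{k}$, i.e. $i \equiv 0 \pmod k$, contradicting the hypothesis. Therefore no such fixed point exists.

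The main obstacle I anticipate is bookkeeping: getting the closed form for $\psi^i$ exactly right, including the off-by-one in the floor/ceiling terms, and verifying the summation identity $\sum_{j=0}^{n-1} \lfloor (i+j)/n \rfloor = i$ cleanly (this follows since among any $n$ consecutive integers $i, i+1, \ldots, i+n-1$ exactly $\lfloor i/n \rfloor$ values... actually more simply: writing $i = qn + s$ with $0 \le s < n$, the floors are $q$ for $n-s$ of the terms and $q+1$ for $s$ of them, giving $qn + s = i$). Once that identity is in hand the argument is just a one-line congruence. An alternative, perhaps cleaner, route would be to observe that $\psi$ descends to a well-defined map on the quotient $\mathbb{Z}_k^n / (\text{diagonal } \mathbb{Z}_k)$ on which it acts simply as the cyclic shift of coordinates (the increment is killed in the quotient), and separately track the "total" coordinate $\sum_j v_j \bmod k$, which strictly advances by $1$ each step; then $\psi^i(v) = v$ forces both the shift to be trivial in the quotient \emph{and} $i \equiv 0 \pmod k$ from the total coordinate, immediately yielding the claim. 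I would likely present whichever of these two is shorter to write rigorously, but both rest on the same elementary observation that iterating $\psi$ inflates the component values by $\lceil i/n \rceil$-ish amounts whose total is $i$.
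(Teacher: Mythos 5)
Your argument is correct and rests on exactly the same observation as the paper's proof: the component sum $\sum_j v_j \bmod k$ increases by $1$ under each application of $\psi$, so $\psi^i(v)=v$ forces $i\equiv 0\pmod k$. The closed form for $\psi^i$ and the identity $\sum_{j=0}^{n-1}\lfloor (i+j)/n\rfloor = i$ are a more laborious route to this same conclusion; the ``alternative'' you sketch at the end (tracking the total coordinate directly) is precisely the paper's one-line proof and is the version worth writing.
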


\begin{proof}
Each time we iterate a vector under $\psi$,
the sum (modulo $k$) of the components of the vector increases by $1$.
If $i$ is not a multiple of $k$,
the sum of the components of $v$ is not equal to the sum of the components of $\psi^i(v)$,
so certainly $v \ne \psi^i(v)$.
\end{proof}

\begin{lemma}
Given $u \in \mathbb{Z}_k^t$ ($t \ge 1$) define a map
$\phi : \mathbb{Z}_k^t \to \mathbb{Z}_k^t$ by
\begin{equation}
\phi(w) = \left( w_2 + u_1, w_3 + u_2, \ldots, w_t + u_{t-1}, w_1 + u_t \right).
\label{eq:phi}
\end{equation}
Let $S = \sum_{j=1}^t u_j \modNormal k$.
Then the fixed point equation
\begin{equation}
w = \phi(w),
\label{eq:generalFixedPointEq}
\end{equation}
has $k$ solutions if $S = 0$ and no solutions otherwise.
\label{le:kSolns}
\end{lemma}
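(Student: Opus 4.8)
The plan is to solve the fixed point equation \eqref{eq:generalFixedPointEq} componentwise by unrolling it. Writing out $w = \phi(w)$ gives the system $w_1 = w_2 + u_1$, $w_2 = w_3 + u_2$, \ldots, $w_{t-1} = w_t + u_{t-1}$, and $w_t = w_1 + u_t$, all modulo $k$. The first $t-1$ equations let us express $w_2, w_3, \ldots, w_t$ in terms of $w_1$: telescoping, $w_j = w_1 - (u_1 + u_2 + \cdots + u_{j-1}) \pmod k$ for each $j$. So a fixed point is completely determined by the single value $w_1 \in \mathbb{Z}_k$, and the only remaining constraint is the last equation $w_t = w_1 + u_t$.

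Next I would substitute the expression for $w_t$ into the last equation. We have $w_t = w_1 - (u_1 + \cdots + u_{t-1}) \pmod k$, and the last equation demands $w_t = w_1 + u_t \pmod k$. Equating these and cancelling $w_1$ gives $-(u_1 + \cdots + u_{t-1}) \equiv u_t \pmod k$, i.e.\ $u_1 + u_2 + \cdots + u_t \equiv 0 \pmod k$, which is exactly the condition $S = 0$. If $S = 0$ this residual equation holds identically, so every choice of $w_1 \in \mathbb{Z}_k$ yields a valid fixed point, giving exactly $k$ solutions; if $S \ne 0$ the residual equation is never satisfied, so there are no solutions.

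I do not expect any serious obstacle here; the proof is essentially a careful bookkeeping of indices and the telescoping sum, with attention paid to doing all arithmetic in $\mathbb{Z}_k$. The one point requiring mild care is making sure the indexing in \eqref{eq:phi} is matched correctly — the $i$th component of $\phi(w)$ is $w_{i+1} + u_i$ for $i < t$ and $w_1 + u_t$ for $i = t$ — so that the telescoping is set up with the right partial sums and the final consistency condition comes out as the full sum $S$ rather than an off-by-one variant. Once the substitution is written down, the dichotomy $S = 0$ versus $S \ne 0$ is immediate.
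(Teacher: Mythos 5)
Your proposal is correct and follows essentially the same route as the paper's proof: eliminate $w_2,\ldots,w_t$ in favour of $w_1$ and observe that the remaining consistency condition is the scalar equation $w_1 = w_1 + S$ in $\mathbb{Z}_k$, which holds identically if $S=0$ (giving $k$ solutions, one per choice of $w_1$) and never otherwise. Your telescoping bookkeeping matches the paper's elimination argument, just written out more explicitly.
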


\begin{proof}
By eliminating $w_2,\ldots,w_t$, the
$t$-dimensional fixed point equation \eqref{eq:generalFixedPointEq}
reduces to the scalar equation $w_1 = w_1 + S$, with all other components of $w$ determined uniquely from $w_1$.
If $S \ne 0$, \eqref{eq:generalFixedPointEq} has no solutions;
if $S = 0$, $w_1$ can take any value in $\mathbb{Z}_k$,
and this generates all solutions, so the number of solutions is $k$.
\end{proof}

\begin{lemma}
Let $k$, $n$, and $a$ be as in Theorem \ref{th:nd}, and $b = \frac{n}{a}$.
An integer $j \ge 1$ is a multiple of ${\rm gcd}(j k,n)$ if and only if $j$ is a multiple of $b$.
\label{le:multiples}
\end{lemma}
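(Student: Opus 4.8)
The plan is to reduce the divisibility condition to a statement purely about prime factorizations, by separating each prime of $n$ according to whether it divides $k$. Recall $a$ is the largest divisor of $n$ coprime to $k$ and $b = \frac{n}{a}$, so that every prime factor of $b$ divides $k$, every prime factor of $a$ does not divide $k$, and $\gcd(a,b)=1$. Write ${\rm gcd}(jk,n)$ by looking at it prime by prime: for a prime $p$ dividing $a$, the $p$-adic valuation of $jk$ equals that of $j$ (since $p \nmid k$), while for a prime $p$ dividing $b$, the $p$-adic valuation of $n$ is at most that of $k$ (this is exactly the defining maximality of $a$), hence at most that of $jk$.

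First I would make precise the observation that for every prime $p$ dividing $b$, $v_p(n) \le v_p(k)$, where $v_p$ denotes $p$-adic valuation. This is the key structural fact: if instead $v_p(n) > v_p(k)$ for some such $p$, then $n/p^{v_p(k)+1}\cdots$ — more cleanly, one could enlarge $a$ by a factor of $p$ and still have it coprime to $k$, contradicting maximality. Actually the cleanest phrasing: the largest divisor of $n$ coprime to $k$ has $p$-adic valuation $v_p(n)$ for $p \nmid k$ and $0$ for $p \mid k$; so for $p \mid b$ we automatically get $v_p(b) = v_p(n)$ and, crucially, $v_p(k) \ge v_p(b) = v_p(n)$ follows only if... hmm — I would be careful here. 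The correct statement is simply $v_p(a) = 0$ for $p \mid k$, hence $b = \prod_{p \mid k} p^{v_p(n)}$, and there is no a priori bound relating $v_p(b)$ to $v_p(k)$. So I would instead argue directly on $\gcd(jk,n) = \prod_p p^{\min(v_p(jk),v_p(n))}$.

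Next I would compute: $j$ is a multiple of $\gcd(jk,n)$ iff $v_p(j) \ge \min(v_p(jk), v_p(n))$ for every prime $p$. Split into two cases. If $p \mid a$ (so $p \nmid k$), then $v_p(jk) = v_p(j)$, so $\min(v_p(jk),v_p(n)) = \min(v_p(j), v_p(n)) \le v_p(j)$ — the condition holds automatically. If $p \mid b$ (so $p \mid k$, $v_p(k) \ge 1$), then $v_p(jk) = v_p(j) + v_p(k) \ge v_p(j) + 1$, and we need $v_p(j) \ge \min(v_p(j)+v_p(k), v_p(n))$; since $v_p(j) < v_p(j)+v_p(k)$, this forces $v_p(j) \ge v_p(n) = v_p(b)$, and conversely $v_p(j) \ge v_p(b)$ suffices. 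Therefore $j$ is a multiple of $\gcd(jk,n)$ iff $v_p(j) \ge v_p(b)$ for all $p \mid b$, which is exactly the statement that $b \mid j$.

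The main obstacle is getting the case split and the valuation inequalities stated cleanly without circular reasoning about the definition of $a$; once the identity $\gcd(jk,n) = \prod_p p^{\min(v_p(jk),v_p(n))}$ is invoked, the two cases are short. I would present it as follows.

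\begin{proof}
For a prime $p$ and nonzero integer $m$ let $v_p(m)$ denote the exponent of $p$ in the prime factorisation of $m$. By definition $a$ is coprime to $k$, so $v_p(a) = 0$ whenever $p \mid k$; and $a$ is the largest such divisor of $n$, so $v_p(a) = v_p(n)$ whenever $p \nmid k$. Consequently $b = \frac{n}{a}$ satisfies $v_p(b) = v_p(n)$ when $p \mid k$ and $v_p(b) = 0$ when $p \nmid k$; in particular every prime factor of $b$ divides $k$.

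Now fix $j \ge 1$. Using $\gcd(jk,n) = \prod_p p^{\min(v_p(jk),\,v_p(n))}$, the number $j$ is a multiple of $\gcd(jk,n)$ if and only if
\begin{equation}
v_p(j) \ge \min\bigl(v_p(jk),\,v_p(n)\bigr) \quad \text{for every prime } p.
\label{eq:valCondition}
\end{equation}
We check \eqref{eq:valCondition} prime by prime. If $p \nmid k$ then $v_p(jk) = v_p(j)$, so $\min(v_p(jk),v_p(n)) \le v_p(j)$ and the inequality in \eqref{eq:valCondition} holds automatically. If $p \mid k$ then $v_p(jk) = v_p(j) + v_p(k) > v_p(j)$, so $\min(v_p(jk),v_p(n)) = \min(v_p(j)+v_p(k),\,v_p(n))$, and the inequality in \eqref{eq:valCondition} holds if and only if $v_p(j) \ge v_p(n) = v_p(b)$.

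Therefore \eqref{eq:valCondition} holds for all $p$ if and only if $v_p(j) \ge v_p(b)$ for every prime $p$ dividing $k$, equivalently for every prime $p$ dividing $b$. This last condition says precisely that $b \mid j$, which completes the proof.
\end{proof}
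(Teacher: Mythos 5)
Your proof is correct, and it rests on the same underlying fact as the paper's: every prime factor of $b$ divides $k$ and carries the full $p$-adic valuation of $n$ (i.e.\ $v_p(b)=v_p(n)$ for $p\mid k$). The difference is organisational: the paper proves the two directions separately --- the ``only if'' by contrapositive, exhibiting a single prime $p$ with $v_p(j)<v_p(b)$ and noting that then both $jk$ and $n$ carry more powers of $p$ than $j$, and the ``if'' by the gcd identity ${\rm gcd}(bk,n)=b\,{\rm gcd}(k,a)=b$, whence ${\rm gcd}(ibk,n)={\rm gcd}(ib,n)\mid ib$ --- whereas you characterise divisibility by ${\rm gcd}(jk,n)$ prime-by-prime via $v_p(j)\ge\min\bigl(v_p(jk),v_p(n)\bigr)$ and get both directions at once. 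Your version is arguably more transparent (each case of the valuation split is a one-line check and there is nothing left implicit), while the paper's converse is shorter on the page because it leans on gcd algebra instead of spelling out valuations; either is a complete proof. The exploratory hesitation in your preamble about whether $v_p(n)\le v_p(k)$ for $p\mid b$ is resolved correctly in the final write-up --- no such bound is needed, and you rightly avoid assuming one.
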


\begin{proof}
First suppose $j$ is not a multiple of $b$.
Then the prime factorisation of $j$ lacks a power of a prime $p$
that is present the prime factorisation of $b$.
Moreover, $p|k$ by the definition of $a$.
Thus $j k$ and $n$ both contain more powers of $p$ than $j$,
so $j$ is not a multiple of ${\rm gcd}(j k,n)$.

To verify the converse we multiply ${\rm gcd}(k,a) = 1$ by $b$ to obtain ${\rm gcd}(b k,n) = b$.
Then ${\rm gcd}(i b k,n) = {\rm gcd}(i b,n)$ for any positive integer $i$.
So $i b$ is a multiple of ${\rm gcd}(i b k,n)$ as required.
\end{proof}

\begin{proof}[Proof of Proposition \ref{pr:numberOrbits}]
The $n^{\rm th}$ iterate of $\psi$ is
$\psi^n(v) = \left( v_1 + 1, v_2 + 1, \ldots, v_n + 1 \right)$.
Thus $\psi^{k n} = \id$ (the identity map)
and $\psi^i \ne \id$ for all $1 \le i < k n$.
Thus the set
\begin{equation}
G = \left\{ \id, \psi, \psi^2, \ldots, \psi^{k n - 1} \right\},
\nonumber
\end{equation}
together with the composition operator, is a group acting on $\mathbb{Z}_k^n$.
In the context of $G$,
the {\em orbit} of any $v \in \mathbb{Z}_k^n$
is the set $\{ \tau(v) \,|\, \tau \in G \}$.
These orbits are equivalent to orbits of $\psi$, thus $N$ (the number of orbits of $\psi$) is equal to the number of orbits of $G$.

Burnside's lemma \cite{Ga98,VaWi01} gives
\begin{equation}
N = \frac{1}{|G|} \sum_{\tau \in G} \cF[\tau],
\nonumber
\end{equation}
where $|G|$ is the number of elements in $G$
and $\cF[\tau]$ is the number of fixed points of $\tau$
(i.e.~the number of vectors $v \in \mathbb{Z}_k^n$ for which $\tau(v) = v$).
So
\begin{equation}
N = \frac{1}{k n} \sum_{i=1}^{k n} \cF[\psi^i].
\nonumber
\end{equation}
By Lemma \ref{le:periodIsMultipleOfk} this reduces to
\begin{equation}
N = \frac{1}{k n} \sum_{j=1}^n \cF[\psi^{j k}].
\label{eq:Burnside3}
\end{equation}
In \eqref{eq:Burnside3} we can uniquely write
\begin{equation}
j k = q n + r,
\label{eq:jk}
\end{equation}
for $0 \le q \le k$ and $0 \le r < n-1$.
By the definition of $\psi$,
\begin{equation}
\psi^{j k}(v) = \left( v_{r+1} + q, v_{r+2} + q, \ldots, v_n + q, v_1 + q + 1, v_2 + q + 1, \ldots, v_r + q + 1 \right).
\label{eq:psijk}
\end{equation}
Now write $n = s t$ where $s = {\rm gcd}(r,n)$.
By \eqref{eq:psijk} the fixed point equation
$\psi^{j k}(v) = v$ decouples into $s$ identical $t$-dimensional systems of the form \eqref{eq:generalFixedPointEq}.
Thus Lemma \ref{le:kSolns} implies
\begin{equation}
\cF[\psi^{j k}] = \begin{cases}
k^s, & \text{if $S = 0$}, \\
0, & \text{otherwise},
\end{cases}
\label{eq:numberOfSolutions}
\end{equation}
where $S$ is the sum of the components of $u$ in \eqref{eq:phi}.
Observe $S = \frac{j k}{s} \modNormal k$,
because the sum of the constants on the right hand-side of \eqref{eq:psijk} is $j k$,
and we have $s$ identical instances of \eqref{eq:generalFixedPointEq}.
Thus $S = 0$ if and only if $j$ is a multiple of $s$.
Since $s = {\rm gcd}(j k,n)$, Lemma \ref{le:multiples}
implies $S = 0$ if and only if $j = i b$, for some $i \in \{ 1,2,\ldots,a \}$.
In this case
\begin{equation}
s = {\rm gcd}(i b k,n) = {\rm gcd}(i b,n) = b \,{\rm gcd}(i,a).
\nonumber
\end{equation}
Using \eqref{eq:numberOfSolutions} we can therefore write \eqref{eq:Burnside3} as
\begin{equation}
N = \frac{1}{k n} \sum_{i=1}^a k^{b \,{\rm gcd}(i,a)}.
\nonumber
\end{equation}
Each ${\rm gcd}(i,a)$ is a divisor of $a$.
By the definition of Euler's totient function,
for any divisor $d$ the number of values of $i \in \{ 1,2,\ldots,a \}$
for which ${\rm gcd}(i,a) = d$ is $\varphi \left( \frac{a}{d} \right)$.
Thus
\begin{equation}
N = \frac{1}{k n} \sum_{d|a} \varphi \left( \frac{a}{d} \right) k^{b d},
\nonumber
\end{equation}
and by replacing $d$ with $\frac{a}{d}$ we obtain \eqref{eq:Nformula}.
\end{proof}

\section{Expanding dynamics}
\label{sec:expansion}

Above we observed that for any set $T_v$ of the form \eqref{eq:T},
the scaled set $\mu T_v$ is a trapping region for any map $f$
that satisfies the conditions of Lemma \ref{le:boxImagePerturbed}.
In this section we show that $f^{kn}$ is piecewise-$C^r$ and expanding on $\mu T_v$.

\begin{lemma}
Let $(a_L,a_R) \in S_k$ ($k \ge 2$),
let $\delta > 0$ be as in Lemma \ref{le:Jimap},
and let $U_3$ and $\mu_3$ be as in Lemma \ref{le:boxImagePerturbed}.
There exists a neighbourhood $U_4 \subset U_3$ of $(d^L,d^R)$ such that
for any piecewise-$C^r$ ($r \ge 1$) map $f$ of the form \eqref{eq:ndbcnfWithHOTs} with $(c^L,c^R) \in U_4$
there exists $\mu_4 \in (0,\mu_3]$ such that $f^{k n}$ is piecewise-$C^r$ and expanding on $\mu T_v$
for all $\mu \in (0,\mu_4)$ and all $v \in \mathbb{Z}_k^n$.
\label{le:expanding}
\end{lemma}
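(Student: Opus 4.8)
The plan is to reduce the $n$-dimensional expansion estimate to the one-dimensional skew tent map analysis via the direct-product structure of Lemma \ref{le:directProduct}, then use the perturbation bound of Lemma \ref{le:perturb} to carry the estimate over to maps of the form \eqref{eq:ndbcnfWithHOTs}. First I would work with the unperturbed simple form $g(\cdot;a_L,a_R,1)$ and observe that, by Lemma \ref{le:directProduct}, $g^n$ is conjugate by a fixed translation to the direct product $h \times h \times \cdots \times h$. Consequently $g^{kn} = (g^n)^k$ is conjugate to the direct product of $n$ copies of $h^k$. Restricted to any box $\Phi_v$, each coordinate lies in some interval $J_{v_j}$ (for $j=1$) or a translate of one, so by Lemma \ref{le:Ii} the relevant factor of $h^k$ acts as one of the two affine branches with slope $\tilde a_L = a_L^{k-2} a_R^2$ or $\tilde a_R = a_L^{k-1} a_R$. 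The key point is that for $(a_L,a_R) \in S_k$ the induced map $h(\,\cdot\,;\tilde a_L,\tilde a_R)$ lies in $Q_1$, and a defining feature of $Q_1$ in Fig.~\ref{fig:zccbifSet1dBCNF} is that both slopes satisfy $|\tilde a_L| > 1$ and $|\tilde a_R| > 1$; in fact one can pin a uniform lower bound $\lambda_0 > 1$ with $|\tilde a_L|, |\tilde a_R| \ge \lambda_0$ from the explicit boundary curves \eqref{eq:left}--\eqref{eq:right}. Since $\rD g^{kn}$ at any point of $\bigcup_v {\rm int}(\Phi_v)$ is then a permuted diagonal matrix (a composition of shifts and diagonal slope matrices), its action on any $v \in \mathbb{R}^n$ scales the norm by at least $\lambda_0$: this shows $g^{kn}$ is piecewise-$C^r$ (indeed piecewise-linear) and expanding with constant $\lambda_0$ on $T_v$.

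Next I would pass to the perturbed map. Apply Lemma \ref{le:perturb} with $\Omega = \bigcup_{v \in \mathbb{Z}_k^n} \Phi_v$ and a small tolerance $\eta$ to be chosen. This gives a neighbourhood $U_1$ of $(d^L,d^R)$ and $\mu_1 > 0$ so that on $\Omega$, $\tilde f(\,\cdot\,;\mu)$ is $C^1$-close to $g$; in particular $\rD \tilde f$ is within $\eta$ of $A_L$ or $A_R$ at every point, and by Lemma \ref{le:boxImagePerturbed} (with $U_3, \mu_3$) each box $\mu\Phi_v$ maps into ${\rm int}(\mu\Phi_{\psi(v)})$, so a forward orbit of $\tilde f$ through $kn$ steps visits the boxes $\mu\Phi_v, \mu\Phi_{\psi(v)}, \ldots, \mu\Phi_{\psi^{kn-1}(v)}$ in turn, returning to $\mu\Phi_v$. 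The derivative $\rD (f^{kn})$ at a point of such an orbit (in the unscaled coordinates the scaling by $\mu$ is irrelevant to derivatives) is a product of $kn$ matrices each within $\eta$ of $A_L$ or $A_R$. Since $g^{kn}$ is expanding, by continuity of composition and of the smallest singular value, there is $\eta > 0$ small enough that any product of $kn$ matrices each $\eta$-close to the corresponding $A_L$ or $A_R$ is still expanding with constant $\lambda = \tfrac{1}{2}(\lambda_0 + 1) > 1$; choose $U_4 \subset U_3$ and $\mu_4 \in (0,\mu_3]$ accordingly, shrinking so that the $C^1$-closeness of Lemma \ref{le:perturb} holds with this $\eta$. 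Finally I would check the piecewise-$C^r$ condition of Definition \ref{df:pws}: the regions are the connected components of the preimages of the boxes under the first $kn-1$ iterates, intersected with $\mu T_v$; each lies in a single smooth branch at every step, the switching manifold $x_1 = 0$ pulls back to piecewise-$C^r$ boundaries because $f$ is piecewise-$C^r$ and a local diffeomorphism on each branch (the linear parts $C_L, C_R$ are invertible for $(c^L,c^R)$ near $(d^L,d^R)$ since $\det A_{L,R} = \pm a_{L,R} \ne 0$ when $a_{L,R} \ne 0$), and $f^{kn}$ extends $C^r$ across each component's closure.

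The main obstacle is the expansion estimate for $g^{kn}$ itself, specifically extracting a uniform bound $\lambda_0 > 1$ on both induced slopes $|\tilde a_L|$ and $|\tilde a_R|$ valid on a neighbourhood in $S_k$-parameter space, and — more subtly — confirming that $\rD g^{kn}$ really is a permuted diagonal matrix on each box, i.e.\ that every coordinate of every point in $\Phi_v$ stays within the single-branch interval dictated by $v$ for all $kn$ iterates. This requires carefully tracking which boxes are visited (this is exactly the orbit of $\psi$ from \S\ref{sec:number}) and verifying, using Lemma \ref{le:boxImage} together with the branch assignment implicit in Lemma \ref{le:Ii} (the interval $I_{k-1}$, hence $J_{k-1}$, is the one straddling the critical point $z=0$), that within each box no coordinate crosses $0$ except exactly once per $k$ steps in the prescribed coordinate — which is precisely what makes each factor equal to one affine branch of $h^k$ rather than a genuine nonlinear piece. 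Once that bookkeeping is in place, the perturbation step is a routine continuity argument, so the remaining content of the lemma is essentially the bound $\lambda_0 > 1$ from the $Q_1$ slope inequalities and the combinatorial tracking of branches.
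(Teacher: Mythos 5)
Your proposal is correct and follows essentially the same route as the paper: reduce via the direct-product structure to the observation that $\rD g^{kn}$ is diagonal with entries in $\{a_L^{k-2}a_R^2,\, a_L^{k-1}a_R\}$ (both exceeding $1$ in modulus on $S_k$), then transfer expansion to $f^{kn}$ by the $C^1$-closeness of Lemma \ref{le:perturb} and verify the piecewise-$C^r$ partition from the pullbacks of the switching manifold. The bookkeeping you flag as the main obstacle is handled in the paper exactly as you anticipate -- orbits cycle through the $J_i$ in order and only $J_{k-1}$ straddles $z=0$, so each block of $k$ factors contributes one of the two induced slopes -- and the paper likewise takes the bound $a_L^{k-2}a_R^2 > a_L^{k-1}|a_R| > 1$ on $S_k$ as a known property of that parameter region.
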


\begin{proof}
Let $y \in \mathbb{R}^n$ be such that $g^j(y)_1 \ne 0$ for all $j \ge 0$.
Then $\left( \rD g^j \right)(y)$ is defined for all $j \ge 1$.
By \eqref{eq:simpleForm} and \eqref{eq:skewTentMap},
\begin{equation}
(\rD g)(y) = \begin{bmatrix}
0 & 1 \\
0 && \ddots \\
\vdots &&& 1 \\
h'(y_1)
\end{bmatrix},
\label{eq:exp1}
\end{equation}
where $h'(y_1) = a_L$ if $y_1 < 0$ and $h'(y_1) = a_R$ if $y_1 > 0$.
For all $j = 2,3,\ldots,n$, the first component of $g^{j-1}(y)$ is $y_j + 1$, thus
\begin{equation}
(\rD g) \left( g^{j-1}(y) \right) = \begin{bmatrix}
0 & 1 \\
0 && \ddots \\
\vdots &&& 1 \\
h'(y_j + 1)
\end{bmatrix}.
\label{eq:exp2}
\end{equation}
By multiplying together the $n$ matrices \eqref{eq:exp1} and \eqref{eq:exp2} for $j = 2,3,\ldots,n$ we obtain
\begin{equation}
\left( \rD g^n \right)(y)
= \begin{bmatrix}
h'(y_1) &&& \\
& h'(y_2+1) && \\
&& \ddots & \\
&&& h'(y_n+1)
\end{bmatrix},
\label{eq:exp3}
\end{equation}
which is diagonal.
Using Lemma \ref{le:directProduct} we take a product of $k$ instances of \eqref{eq:exp3} to obtain
\begin{equation}
\left( \rD g^{k n} \right)(y)
= \begin{bmatrix}
\prod_{i=0}^{k-1} h' \left( h^i(x_1) \right) &&& \\
& \prod_{i=0}^{k-1} h' \left( h^i(x_2+1) \right) && \\
&& \ddots & \\
&&& \prod_{i=0}^{k-1} h' \left( h^i(x_n+1) \right)
\end{bmatrix}.
\label{eq:exp4}
\end{equation}
But on $\bigcup_i J_i$, orbits of $g$ cycle through the intervals in order,
thus the diagonal entries of \eqref{eq:exp4} can only take values in $\left\{ a_L^{k-2} a_R^2, a_L^{k-1} a_R \right\}$.
Notice $a_L^{k-2} a_R^2 > a_L^{k-1} |a_R| > 1$ for any $(a_L,a_R) \in S_k$.
Thus each smooth component of $g^{k n} \big|_{\tilde{\Phi}}$
has a Jacobian matrix that is diagonal with each diagonal entry greater than one in absolute value.
Thus $g^{k n}$ is piecewise-linear and expanding.

By perturbing $g$ to $f \in U_3$ and considering $\mu \in (0,\mu_3)$,
no additional symbolic itineraries are possible for orbits in $\mu T_v$.
Moreover, by Lemma \ref{le:perturb} the Jacobian matrix of each smooth piece of $f^{k n}$
is a small perturbation of a diagonal matrix with diagonal entries greater than one in absolute value.
Hence there exists a neighbourhood $U_4 \subset U_3$ of $(d^L,d^R)$ and $\mu_4 \in (0,\mu_3]$
such that for any $f \in U_4$ and $\mu \in (0,\mu_4)$,
each smooth piece of $f^{k n}$ is expanding.
Further each smooth component is $C^r$ because $f$ is piecewise-$C^r$.

Finally we note that the regions on which $f^{k n}$ is smooth are nice: in accordance with Definition \ref{df:pws}.
The boundaries of these regions are defined implicitly by $f^i(x)_1 = 0$, for $i = 0,1,\ldots,k n - 1$.
For the map $g$, for any such value of $i$ we have $g^i(x)_1 = \alpha x_j + \beta$
for some $\alpha \ne 0$, $\beta \in \mathbb{R}$, and $j \in \{ 1,2,\ldots,n \}$.
That is, each boundary is a hyperplane normal to one of the coordinate axes
(in fact they divide each $\Phi_v$ into $2^n$ regions).
Since $\alpha \ne 0$, these boundaries perturb smoothly with no additional intersections,
so $f^{k n} \big|_{\mu T_v}$ is indeed piecewise-$C^r$ and expanding
assuming $U_4$ and $\mu_4$ are sufficiently small.
\end{proof}

\section{Collating the results to prove Theorems \ref{th:2d} and \ref{th:nd}}
\label{sec:proof}

\begin{proof}[Proof of Theorem \ref{th:nd}]
Let $U_2$ and $\mu_2$ be as in Lemma \ref{le:stableFixedPoint},
and $U_4$ and $\mu_4$ be as in Lemma \ref{le:expanding}.
Let $U = U_2 \cap U_4$ be a neighbourhood of $(d^L,d^R)$, and $\mu_0 = {\rm min}(\mu_2,\mu_4) > 0$.

Choose any $(c^L,c^R) \in U$ and $\mu \in (-\mu_0,\mu_0)$.
If $\mu < 0$ then $f$ has an asymptotically stable fixed point by Lemma \ref{le:stableFixedPoint}.
Now suppose $\mu > 0$.
For any $v \in \mathbb{Z}_k^n$, let $T_v$ be given by \eqref{eq:T}
using $\delta > 0$ as in Lemma \ref{le:Jimap}.
Then by Lemma \ref{le:boxImagePerturbed}, the scaled set $\mu T_v$ is a trapping region for $f$.
By Lemma \ref{le:expanding}, $f^{k n}$ is piecewise-$C^r$ and expanding on $\mu T_v$.
Finally by Proposition \ref{pr:numberOrbits} the number of such trapping regions is
given by \eqref{eq:Nformula}.
\end{proof}

\begin{proof}[Proof of Theorem \ref{th:2d}]
This follows from Theorem \ref{th:nd} with $n=2$ and $k = 2 N$ (and results in $m = k n$).
\end{proof}

\section{Discussion}
\label{sec:conc}

In this paper we have extended the ideas of \cite{PuRo18,Gl15b,WoYa19}
to show that stable fixed points can bifurcate to
any number of coexisting chaotic attractors in BCBs.
In fact the dynamics on the attractors is expanding
so the attractors are typically all $n$-dimensional.

The example in Fig.~\ref{fig:exBifDiag}
was obtained by first choosing a point in $S_3$, see Fig.~\ref{fig:zccbifSet1dBCNF}.
Specifically we used $(a_L,a_R) = (0.62,-3)$.
As a small perturbation from $d^L = \begin{bmatrix} 0 \\ a_L \end{bmatrix}$ and $d^R = \begin{bmatrix} 0 \\ a_R \end{bmatrix}$,
we used $c^L = \begin{bmatrix} \tau_L \\ -\delta_L \end{bmatrix}$ and $c^L = \begin{bmatrix} \tau_R \\ -\delta_R \end{bmatrix}$
with the values \eqref{eq:xiEx}.
We also incorporated nonlinearity through $E_L$ \eqref{eq:ELEREx}
to illustrate what is likely a typical breakdown of coexistence at $\mu \approx 0.01$.

The corresponding simple form $g$ has $N[3,2] = 2$ trapping regions of the form $T_v$ \eqref{eq:T}.
These are for $v = (0,0)$ (comprised of six boxes) and $v(1,0)$ (comprised of three boxes).
This division is plainly evident in the geometry of the attractors shown in Fig.~\ref{fig:exBifDiag}-b.

In general for the map $g$
each trapping region $T_v$ contains a unique attractor.
This is because on $I_0 \cup \cdots \cup I_{k-1}$ the corresponding skew tent map $h$ is {\em locally eventually onto} \cite{Ru17}
(this follows from the proof of Lemma 2.2 of \cite{VeGl90}).
Consequently the direct product $g^n$ is locally eventually onto on a Cartesian product of unions of intervals
(this idea is used in the proof of Lemma 2.2 of \cite{PuRo18}).
It follows $g$ is transitive on $\bigcap_{i \ge 0} g^i(T_v)$, so $T_v$ contains a unique attractor.
It remains to determine whether or not this generalises from $g$ to all sufficiently small perturbations $f$.

Instead of perturbing about $c^L = a_L e_n$ and $c^R = a_R e_n$, for carefully chosen values of $a_L$ and $a_R$,
we could instead perturb about $c^L = a_L e_j$ and $c^R = a_R e_j$, where $j < n$.
This allows $f$ to be invertible and in this setting we expect to see $j$-dimensional attractors,
although the expansion arguments of \S\ref{sec:expansion} cannot be easily generalised to establish this.
The case $j=1$ and $n=2$ with the Lozi map (a subfamily of the two-dimensional border-collision normal form)
was considered by Cao and Liu in \cite{CaLi98}.
They showed that attractors of sufficiently small perturbations
exhibit chaos in the sense of Devaney \cite{De89}.
From an ergodic viewpoint this type of problem was studied in \cite{WaYo01,Yo85}.

\section*{Acknowledgements}

This work was supported by Marsden Fund contract MAU1809, managed by Royal Society Te Ap\={a}rangi.
The author thanks Paul Glendinning and Chris Tuffley for discussions that helped improve the results.

\appendix


\begin{thebibliography}{10}

\bibitem{ZhMo03}
Z.T. Zhusubaliyev and E.~Mosekilde.
\newblock {\em Bifurcations and Chaos in Piecewise-Smooth Dynamical Systems.}
\newblock World Scientific, Singapore, 2003.

\bibitem{ZhMo06b}
Z.T. Zhusubaliyev, E.~Mosekilde, S.~Maity, S.~Mohanan, and S.~Banerjee.
\newblock Border collision route to quasiperiodicity: Numerical investigation
  and experimental confirmation.
\newblock {\em Chaos}, 16(2):023122, 2006.

\bibitem{DiBu08}
M.~di~Bernardo, C.J. Budd, A.R. Champneys, and P.~Kowalczyk.
\newblock {\em Piecewise-smooth Dynamical Systems. Theory and Applications.}
\newblock Springer-Verlag, New York, 2008.

\bibitem{SzOs09}
R.~Szalai and H.M. Osinga.
\newblock Arnol'd tongues arising from a grazing-sliding bifurcation.
\newblock {\em SIAM J. Appl. Dyn. Sys.}, 8(4):1434--1461, 2009.

\bibitem{RoHi19b}
M.G. Roberts, R.I. Hickson, J.M. McCaw, and L.~Talarmain.
\newblock A simple influenza model with complicated dynamics.
\newblock {\em J. Math. Bio.}, 78(3):607--624, 2019.

\bibitem{NuYo92}
H.E. Nusse and J.A. Yorke.
\newblock Border-collision bifurcations including ``period two to period
  three'' for piecewise smooth systems.
\newblock {\em Phys. D}, 57:39--57, 1992.

\bibitem{NuYo95}
H.E. Nusse and J.A. Yorke.
\newblock Border-collision bifurcations for piecewise-smooth one-dimensional
  maps.
\newblock {\em Int. J. Bifurcation Chaos.}, 5(1):189--207, 1995.

\bibitem{BaGr99}
S.~Banerjee and C.~Grebogi.
\newblock Border collision bifurcations in two-dimensional piecewise smooth
  maps.
\newblock {\em Phys. Rev. E}, 59(4):4052--4061, 1999.

\bibitem{Ho04}
S.J. Hogan.
\newblock The effect of smoothing on bifurcation and chaos computations in
  non-smooth mechanics.
\newblock In {\em Proceedings of the {XXI} International Congress of
  Theoretical and Applied Mathematics.}, 2004.

\bibitem{LeNi04}
R.I. Leine and H.~Nijmeijer.
\newblock {\em Dynamics and Bifurcations of Non-smooth Mechanical Systems},
  volume~18 of {\em Lecture Notes in Applied and Computational Mathematics}.
\newblock Springer-Verlag, Berlin, 2004.

\bibitem{Si16}
D.J.W. Simpson.
\newblock Border-collision bifurcations in $\mathbb{R}^n$.
\newblock {\em SIAM Rev.}, 58(2):177--226, 2016.

\bibitem{DuNu99}
M.~Dutta, H.E. Nusse, E.~Ott, J.A. Yorke, and G.~Yuan.
\newblock Multiple attractor bifurcations: A source of unpredictability in
  piecewise smooth systems.
\newblock {\em Phys. Rev. Lett.}, 83(21):4281--4284, 1999.

\bibitem{KaMa98}
T.~Kapitaniak and Yu. Maistrenko.
\newblock Multiple choice bifurcations as a source of unpredictability in
  dynamical systems.
\newblock {\em Phys. Rev. E}, 58(4):5161--5163, 1998.

\bibitem{SiMe08b}
D.J.W. Simpson and J.D. Meiss.
\newblock Neimark-{S}acker bifurcations in planar, piecewise-smooth, continuous
  maps.
\newblock {\em SIAM J. Appl. Dyn. Sys.}, 7(3):795--824, 2008.

\bibitem{SuGa08}
I.~Sushko and L.~Gardini.
\newblock Center bifurcation for two-dimensional border-collision normal form.
\newblock {\em Int. J. Bifurcation Chaos}, 18(4):1029--1050, 2008.

\bibitem{ZhMo08}
Z.T. Zhusubaliyev, E.~Mosekilde, S.~De, and S.~Banerjee.
\newblock Transitions from phase-locked dynamics to chaos in a piecewise-linear
  map.
\newblock {\em Phys. Rev. E}, 77:026206, 2008.

\bibitem{DoLa08}
Y.~Do and Y.-C. Lai.
\newblock Multistability and arithmetically period-adding bifurcations in
  piecewise smooth dynamical systems.
\newblock {\em Chaos}, 18:043107, 2008.

\bibitem{Si14}
D.J.W. Simpson.
\newblock Sequences of periodic solutions and infinitely many coexisting
  attractors in the border-collision normal form.
\newblock {\em Int. J. Bifurcation Chaos}, 24(6):1430018, 2014.

\bibitem{Si14b}
D.J.W. Simpson.
\newblock Scaling laws for large numbers of coexisting attracting periodic
  solutions in the border-collision normal form.
\newblock {\em Int. J. Bifurcation Chaos}, 24(9):1450118, 2014.

\bibitem{AvSc12}
V.~Avrutin, M.~Schanz, and S.~Banerjee.
\newblock Occurrence of multiple attractor bifurcations in the two-dimensional
  piecewise linear normal form map.
\newblock {\em Nonlin. Dyn.}, 67:293--307, 2012.

\bibitem{PuRo18}
A.~Pumari\~{n}o, J.A. Rodr{\'\i}guez, and E.~Vigil.
\newblock Renormalization of two-dimensional piecewise linear maps: {A}bundance
  of {2-D} strange attractors.
\newblock {\em Discrete Contin. Dyn. Syst.}, 38(2):941--966, 2018.

\bibitem{Gl15b}
P.~Glendinning.
\newblock Bifurcation from stable fixed point to ${N}$-dimensional attractor in
  the border collision normal form.
\newblock {\em Nonlinearity}, 28:3457--3464, 2015.

\bibitem{WoYa19}
C.H. Wong and X.~Yang.
\newblock Coexistence of two-dimensional attractors in border collision normal
  form.
\newblock {\em Int. J. Bifurcation Chaos}, 29(9):1950126, 2019.

\bibitem{Co02}
W.J. Cowieson.
\newblock Absolutely continuous invariant measures for most piecewise smooth
  expanding maps.
\newblock {\em Ergod. Th. \& Dynam. Sys.}, 22:1061--1078, 2002.

\bibitem{Sa00}
B.~Saussol.
\newblock Absolutely continuous invariant measures for multidimensional
  expanding maps.
\newblock {\em Israel J. Math.}, 116:223--248, 2000.

\bibitem{Bu00}
J.~Buzzi.
\newblock Absolutely continuous invariant probability measures for arbitrary
  expanding piecewise $\mathbb{R}$-analytic mappings of the plane.
\newblock {\em Ergod. Th. \& Dynam. Sys.}, 20:697--708, 2000.

\bibitem{Ts00}
M.~Tsujii.
\newblock Absolutely continuous invariant measures for piecewise real-analytic
  expanding maps on the plane.
\newblock {\em Commun. Math. Phys.}, 208:605--622, 2000.

\bibitem{Bu99}
J.~Buzzi.
\newblock Absolutely continuous invariant measures for generic
  multi-dimensional piecewise affine expanding maps.
\newblock {\em Int. J. Bifurcation Chaos}, 9(9):1743--1750, 1999.

\bibitem{Ts01}
M.~Tsujii.
\newblock Absolutely continuous invariant measures for expanding piecewise
  linear maps.
\newblock {\em Invent. Math.}, 143:349--373, 2001.

\bibitem{Ro04}
R.C. Robinson.
\newblock {\em An Introduction to Dynamical Systems. Continuous and Discrete.}
\newblock Prentice Hall, Upper Saddle River, NJ, 2004.

\bibitem{Me07}
J.D. Meiss.
\newblock {\em Differential Dynamical Systems.}
\newblock SIAM, Philadelphia, 2007.

\bibitem{GrOt83}
C.~Grebogi, E.~Ott, and J.A. Yorke.
\newblock Crises, sudden changes in chaotic attractors, and transient chaos.
\newblock {\em Phys. D}, 7:181--200, 1983.

\bibitem{ChNo13}
H.-L. Chan and M.~Norrish.
\newblock A string of pearls: {P}roofs of {F}ermat's little theorem.
\newblock {\em J. Formaliz. Reason.}, 6(1):63--87, 2013.

\bibitem{Go56}
S.W. Golomb.
\newblock Combinatorical proof of {F}ermat's ``little'' theorem.
\newblock {\em Amer. Math. Monthly}, 63(10):718, 1956.

\bibitem{SlPl95}
N.J.A. Sloane and S.~Plouffe.
\newblock {\em The encyclopedia of integer sequences.}
\newblock Academic Press, San Diego, CA, 1995.

\bibitem{Go17}
S.W. Golomb.
\newblock {\em Shift Register Sequences.}
\newblock World Scientific, Singapore, 3rd edition, 2017.

\bibitem{St08}
D.~Stoffer.
\newblock Delay equations with rapidly oscillating stable periodic solutions.
\newblock {\em J. Dyn. Diff. Equat.}, 20(1):201--238, 2008.

\bibitem{Di03}
M.~di~Bernardo.
\newblock Normal forms of border collision in high dimensional non-smooth maps.
\newblock In {\em Proceedings IEEE ISCAS, Bangkok, Thailand}, volume~3, pages
  76--79, 2003.

\bibitem{ItTa79b}
S.~Ito, S.~Tanaka, and H.~Nakada.
\newblock On unimodal linear transformations and chaos {II}.
\newblock {\em Tokyo J. Math.}, 2:241--259, 1979.

\bibitem{MaMa93}
Yu.L. Maistrenko, V.L. Maistrenko, and L.O. Chua.
\newblock Cycles of chaotic intervals in a time-delayed {C}hua's circuit.
\newblock {\em Int. J. Bifurcation Chaos.}, 3(6):1557--1572, 1993.

\bibitem{CoEc80}
P.~Collet and J.-P. Eckmann.
\newblock {\em Iterated Maps of the Interval as Dynamical Systems.}
\newblock Birkh\"{a}user, Boston, 1980.

\bibitem{VeGl90}
D.~Veitch and P.~Glendinning.
\newblock Explicit renormalisation in piecewise linear bimodal maps.
\newblock {\em Phys. D}, 44:149--167, 1990.

\bibitem{Ga98}
J.A. Gallian.
\newblock {\em Contemporary Abstract Algebra.}
\newblock Houghton Mifflin, Boston, 4th edition, 1998.

\bibitem{VaWi01}
J.H. van Lint and R.M. Wilson.
\newblock {\em A Course in Combinatorics.}
\newblock Cambridge University Press, New York, 2001.

\bibitem{Ru17}
S.~Ruette.
\newblock {\em Chaos on the interval.}
\newblock American Mathematical Society, Providence, RI, 2017.

\bibitem{CaLi98}
Y.~Cao and Z.~Liu.
\newblock Strange attractors in the orientation-preserving {L}ozi map.
\newblock {\em Chaos Solitons Fractals}, 9(11):1857--1863, 1998.

\bibitem{De89}
R.L. Devaney.
\newblock {\em An Introduction to Chaotic Dynamical Systems.}
\newblock Addison-Wesley, New York, 2nd edition, 1989.

\bibitem{WaYo01}
Q.~Wang and L.-S. Young.
\newblock Strange attractors with one direction of instability.
\newblock {\em Commun. Math. Phys.}, 218:1--97, 2001.

\bibitem{Yo85}
L.-S. Young.
\newblock Bowen-{R}uelle measures for certain piecewise hyperbolic maps.
\newblock {\em Trans. Amer. Math. Soc.}, 287(1):41--48, 1985.

\end{thebibliography}
\end{document}